\documentclass[12pt]{article}
\usepackage{amsmath, amsfonts, amsthm}
\usepackage{fancyhdr}
\usepackage{fullpage}
\usepackage{physics}
\usepackage{mathtools}
\usepackage{hyperref}
\usepackage{amssymb}
\usepackage{mathrsfs}
\usepackage{array}
\usepackage{amscd}
\usepackage{mathrsfs}
\usepackage[all,cmtip]{xy}
\usepackage{lmodern}
\usepackage[Symbol]{upgreek}
\usepackage{bm}
\usepackage[nopar]{lipsum}
\usepackage{rotating}
\usepackage{tikz}
\usepackage{calc}
\usepackage{tikz-cd}
\usetikzlibrary{decorations.pathmorphing}

\usepackage[utf8]{inputenc}
\usepackage[english]{babel}
\usepackage{t1enc}
\usepackage[mathscr]{eucal}
\usepackage{indentfirst}
\usepackage{graphicx}
\usepackage{graphics}
\usepackage{pict2e}
\usepackage{epic}
\usepackage[margin=2.9cm]{geometry}
\usepackage{epstopdf}

\hypersetup{
    colorlinks=true,
    linkcolor=blue,
    filecolor=magenta,      
    urlcolor=blue,
    citecolor=blue
}
 
\urlstyle{same}




\newtheoremstyle{exampstyle}
  {5pt} 
  {5pt} 
  {} 
  {} 
  {} 
  {.} 
  {.5em} 
  {} 

\newtheorem{theorem}{Theorem}[section]

\newtheorem{definition}{Definition}[section]
\newtheorem*{axiom*}{Axiom}

\newtheorem{proposition}{Proposition}[section]
\newtheorem{lemma}{Lemma}[section]
\newtheorem{corollary}{Corollary}[section]

\newtheoremstyle{exampstyle2}
  {5pt} 
  {5pt} 
  {\small \slshape} 
  {} 
  {\normalsize\itshape} 
  {.} 
  {.5em} 
  {} 

\theoremstyle{exampstyle2}
\newtheorem*{example}{Example}

\newcommand{\ds}{\displaystyle}

\newcommand{\is}{\hspace{2pt}}
\newcommand{\dx}{\is dx}


\newcommand{\R}{\mathbb{R}}
\newcommand{\C}{\mathbb{C}}

\newcommand{\Proj}{\mathbb{P}}

\fancyhead[R]{\empty}

\begin{document}

\title{\textbf{The Complex Geometry and Representation Theory of Statistical
Transformation Models}}\author{Shuhao Li}\date{}\maketitle



\begin{abstract}

Given a measure space ${\mathcal X}$, we can construct a number of induced structures: eg. its $L^2$ space, the space ${\mathcal P}({\mathcal X})$ of probability distributions on ${\mathcal X}$. If, in addition, ${\mathcal X}$ admits a transitive measure-preserving Lie group action, natural actions are induced on those structures. We expect relationships between these induced structures and actions. We study, in particular, the relations between $L^2({\mathcal X})$ and exponential transformation models on ${\mathcal X}$, which are special “submanifolds” of ${\mathcal P}({\mathcal X})$ closed under the induced action, whose tangent bundles are Kähler manifolds (given by Molitor). Geometrically,  we show the tangent bundle has, locally, the “same” Kähler metric with the Fubini-Study metric on the projectivization of $L^2({\mathcal X})$. Moreover we show the action on the tangent bundle is equivariant with that on $L^2({\mathcal X})$, which is a unitary representation. Finally, in some cases, when the symplectic action on the tangent bundle is Hamiltonian, we show that any coadjoint orbit in the image of its moment map induces, via Kirillov’s correspondence from orbit method, irreducible unitary representations that are sub-representations of the aforementioned representation in $L^2({\mathcal X})$.

\end{abstract}

\section{Introduction}

Given a measure space $\mathcal{X}$ with measure $\mu$,
we can construct a number of natural algebraic or geometric structures from the measure space structure of $\mathcal{X}$: for example, the $L^2$ space of $(\mathcal{X},\mu)$ with the Hilbert space structure, the space $\mathcal{P}(\mathcal{X})$ of all probability distributions over $\mu$ with the Fisher information metric, and in case $\mathcal{X}$ is a Riemannian manifold with the geometric measure from the metric, the cotangent bundle $T^*\mathcal{X}$ with the symplectic structure. In particular, the field of information geometry studies statistical manifolds, which are, in some sense, finite-dimensional immersed submanifolds of the space $\mathcal{P}(\mathcal{X})$ (which is in general infinite dimensional). These statistical manifolds have the structure of a Riemannian manifold (with the Fisher metric) and in addition a compatible pair of affine connections, called the exponential and mixture connections. A class of particularly nice examples of statistical manifolds are the exponential families. In \cite{molitor2013exponential}, Molitor proves that the tangent bundle of a statistical manifold can be naturally given a K\"ahler structure using a construction given by Dombrowski in \cite{dombrowski1962geometry}, where Molitor demonstrates the connection of this construction with quantum mechanics.

If, in addition, a Lie group $G$ acts transitively on $\mathcal{X}$ and the measure $\mu$ is a $G$-invariant measure, this structure-preserving $G$-action can naturally give rise to actions on those algebraic and geometric structures built upon the measure space structure of $\mathcal{X}$: $G$ acts on the $L^2$ space by left translation, i.e. a function $f:\mathcal{X}\to \C$ is mapped to a function $gf$ by $g\in G$ such that $(gf)(x)=f(g^{-1}x)$ for any $x\in \mathcal{X}$. This action is in fact a unitary representation of $G$. A similar $G$-action on $\mathcal{P}(\mathcal{X})$ can be defined. Finally in case $\mathcal{X}$ is a Riemannian manifold with geometric measure such that $G$ acts on $\mathcal{X}$ by isometry in the Riemannian metric, $G$ can act on the cotangent bundle $T^*\mathcal{X}$ by pullback maps. If these new structures induced from $\mathcal{X}$ only depends on the measure space structure of $\mathcal{X}$ and the action of $G$ on $\mathcal{X}$ leaves the measure unchanged, then it is intuitive that these induced actions of $G$ on the new structures are structure-preserving, which we will make precise for some of the cases in the following sections. In particular, in e.g. \cite{barndorff2012parametric,barndorff2012decomposition}, Barndorff-Nielsen studies statistical transformation models, which are nice statistical manifolds in $\mathcal{P}(\mathcal{X})$ that are closed under the actions of $G$. Given the work by Molitor, we are particularly interested in exponential transformation models, that is, statistical transformation models which are also exponential families. The action of $G$ on the exponential transformation model gives rise to a $G$-action on the tangent bundle of the exponential transformation model by pushforward maps, and this induced action can be shown to be an action by isometries in the K\"ahler metric. Thus it is, in particular, a symplectic action.

It is then reasonable to expect some relationships between these induced structures from the measure space $\mathcal{X}$ and the induced structure-preserving $G$-actions on the different structures. In this work, we are interested in exploring the relationship between the tangent bundle $TM$ of an exponential transformation model $M$ with the $G$-action by K\"ahler isometry, and the $L^2$ space of $\mathcal{X}$ with the unitary $G$-representation, under some special circumstances. We shall show, generalizing from Molitor's work in \cite{molitor2013exponential}, that there is a $G$-equivariant map from $TM$ to $L^2(\mathcal{X})$, such that if we compose the map with the canonical projection map $L^2(\mathcal{X})\to \Proj L^2(\mathcal{X})$, the resulting map respects the K\"ahler structures of $TM$ and the Fubini-Study K\"ahler metric of $\Proj L^2(\mathcal{X})$. 
Furthermore  we assume that $\mathcal{X}$ has a normal stabilizer subgroup at some point in $\mathcal{X}$, and that the group is a semisimple nilpotent (or more generally exponential solveable) Lie group. We further assume that the $G$-action on the tangent bundle is not only symplectic but also Hamiltonian. Let $\mathfrak{g}^*$ be the dual Lie algebra of $G$. Kirillov's orbit method (see e.g. \cite{kirillov2004lectures,folland2016course}) establishes a one-to-one correspondence between irreducible unitary representations of $G$ and coadjoint orbits of $G$ (i.e. the orbits in the coadjoint action of $G$ on $\mathfrak{g}^*$). 
Now in our situation, the moment map $\mu:TM\to \mathfrak{g}^*$ of the Hamiltonian action takes $TM$ to certain coadjoint orbits, which induces unitary irreducible representations of $G$. We will show that, if we add some more constraints to the Hamiltonian action, these representations can be realized as subrepresentations of the aforementioned unitary representation of $G$ in $L^2(\mathcal{X})$.
In general, following this principle, we can expect more relationships between the geometric and algebraic structures induced from $\mathcal{X}$, which could reveal interesting representation theoretic properties of $G$.

\bigskip

\textbf{Acknowledgement.} I would like to thank my advisor Professor Renato Feres for introducing me to this topic, and for the countless hours guiding me to develop this work. I would like to thank Professor Klaus Mohnke for pointing out the relation between the symplectic form in the Dombrowski's construction and the canonical symplectic form on cotangent bundles. A large portion of this work is carried out during the Freiwald Scholars Program in the summers of 2019 and 2020.

\section{Background: Exponential Families}

This section presents definitions needed from information geometry and Molitor's construction of Kählerification, following \cite{amari2007methods} and \cite{molitor2013exponential}.

\subsection{Information Geometry}

Let $\mathcal{X}$ be a measure space with measure $\mu$. We let $\mathcal{P}(\mathcal{X})$ be the space of all probability distributions on $\mathcal{X}$ under $\mu$:
\begin{equation}
  \mathcal{P}(\mathcal{X}) = \{p:\mathcal{X}\to \R \mid p\text{ is measurable}, p\geq 0, \int_{\mathcal{X}} p(x) \,d\mu = 1\}.
\end{equation}

\begin{example}
If $\mathcal{X}$ is a finite set with the counting measure, we index elements of $\mathcal{X}$ from $1$ to $n$, and denote the probability of the $i$-th element to be $p_i$. Then we have
\begin{equation}
  \mathcal{P}_n:=\mathcal{P}(\mathcal{X}) = \{p:\mathcal{X}\to \R\mid 0\leq p_i \leq 1, \sum_{i=1}^n p_i = 1\}.
\end{equation}
This set can be viewed as a $(n-1)$-dimensional manifold with boundary.

\end{example}
In general, of course, $\mathcal{P}(\Omega)$ may be infinite-dimensional.
A statistical model, loosely speaking, is a parametrized (finite-dimensional) submanifold of $P(\mathcal{X})$. 
\begin{definition}
Let $\mathcal{X}$ be a measure space with measure $\mu$. A \textbf{statistical model} is a $n$-dimensional differentiable manifold $M$ together with a map $j:M\to \mathcal{P}(\mathcal{X})$ such that 
\begin{enumerate}
  \item $j$ is one-to-one;
  \item  For $\xi\in M$, $1\leq i\leq n$, define functions $(\partial_i  j)_{\xi}$ on $\mathcal{X}$ as follows: pick a smooth path $\gamma(t)$ realizing $(\frac{\partial}{\partial \xi^i})_{\xi}\in T_\xi M$; then for each $x\in \mathcal{X}$,
  \begin{align*}
   (\partial_i j)_{\xi} (x)=\frac{d}{dt}\bigg|_{t=0}\big[j(\gamma(t))\big](x).
   \end{align*} We require $(\partial_1 j)_\xi,\dots, (\partial_n j)_\xi$ to be $\R$ linearly independent as functions on $\mathcal{X}$, for each $\xi\in M$ (i.e. $dj_\xi$ is one-to-one for every $\xi\in M$).
\end{enumerate}
\end{definition}
For the most common examples of (finite-dimensional) statistical models, there is usually a global chart $M\to E$ (which we will always assume, for simplicity), where $E$ is an open set of $\R^n$. With some abuse of notations, we will usually write a statistical model as
\begin{equation}
  M=\{p(x;\xi)\mid \xi=(\xi^1,\dots, \xi^n)\in E\subset_{\text{open}} \R^n \},
\end{equation}
or simply write $\xi\in \R^n$ as elements of $S$.
\begin{example}[Finite sets]
Suppose $\Omega = \{x_1,\dots, x_n\}$ is given the counting measure.
Let $E\subset \R^{n-1}$ be the set 
\begin{equation}
E=\{(\xi^1,\dots, \xi^{n-1})\mid \xi^i>1 \text{ for all }i, \text{ and }\sum_{i=1}^{n-1}\xi^i<1\}.
\end{equation}
The statistical model $\mathcal{P}_n^\times$ is defined as
\begin{equation}
  \mathcal{P}_n^\times = \{p_\xi:\Omega\to \R\mid \xi\in E\},
\end{equation}
where 
\begin{equation}
  p_\xi(x_i) = 
  \begin{cases}
    \xi^i &i=1,\dots, n-1\\
    1-\sum_{i=1}^{n-1}\xi^i &i=n
  \end{cases}
  .
\end{equation}
$\mathcal{P}_n^\times$ is $\mathcal{P}_n$ without the boundary. The map $j$ can easily be checked to satisfy the two requirements since it can be viewed as an inclusion map $j: \mathcal{P}_n^\times \hookrightarrow \mathcal{P}_n$.
\end{example}


\begin{example}[Exponential Families]
An exponential family $\mathcal{E}$ on a measure space $\mathcal{X}$ with measure $\mu$ is a statistical model
\begin{equation}
  M = \Bigg\{p(x;\xi) = \exp{C(x)+\sum_{i=1}^n \xi^i F_i(x) - \psi(\xi)} \mid \xi \in E\subset_{\text{open}} \R^n\Bigg\},
\end{equation}
where $C(x), F_1(x),\dots, F_n(x):\mathcal{X}\to \R$ are measurable functions, $\{1, F_1(x),\dots, F_n(x)\}$ are linearly independent, and $\psi$ is the normalizing constant
\begin{equation}
   \psi(\xi) = \ln(\int_\mathcal{X} \exp{C(x)+\sum_{i=1}^n \xi^i F_i(x)}\dx).
 \end{equation} 
\end{example}
Many common statistical models are exponential families. For example, $\mathcal{P}_n^\times$ is an exponential family. For any appropriate $\xi$, we have
\begin{equation}
  p_\xi(x_j)=\exp{\sum_{i=1}^n \xi^i F_i(x) - \psi(\xi)},
\end{equation}
where $F_i(x_j)=\delta_{ij}$, and $\psi(\xi)$ is some appropriate normalization function. Other important examples of exponential families are Gaussian, binomial, exponential, Poisson. In this paper we will always consider exponential families.

\bigskip
Next, we will introduce certain quantities that are meaningful in the context of probability, which give a geometric structure to the manifold. We introduce the Fisher-information matrix on a statistical model $M$, which will give the Riemannian geometric structure on the statistical model as a manifold.

  \begin{definition}
    The \textbf{Fisher-information matrix} at $\xi \in M$ is the matrix with entries $\mathbb{E}[\partial_i l_\xi(x) \partial_j l_\xi(x)]$ at the $(i,j)$ position.
  \end{definition}



 \begin{theorem}
   The bilinear mapping $h_\xi:T_\xi M\times T_\xi M\to \R$ on $\xi \in M$ given by
   \begin{equation}
     h_\xi(\frac{\partial}{\partial\xi^i},\frac{\partial}{\partial\xi^j}) = h_{ij}=\mathbb{E}[\partial_i l_\xi(x)\partial_j l_\xi(x)]
   \end{equation}
   is a Riemannian metric (i.e. the Fisher-information matrix is symmetric and positive-definite). This metric on $M$ is referred to as the \textbf{Fisher metric}.
 \end{theorem}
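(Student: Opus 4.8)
The plan is to verify the two defining properties of a Riemannian metric at each point $\xi$ — symmetry and positive-definiteness of the bilinear form $h_\xi$ — together with smooth dependence on $\xi$. Throughout I write $l_\xi(x) = \ln p(x;\xi)$ for the log-likelihood and recall that $\mathbb{E}[\,\cdot\,]$ denotes expectation against the distribution $p_\xi$, so that $h_{ij} = \int_{\mathcal{X}} \partial_i l_\xi(x)\,\partial_j l_\xi(x)\, p(x;\xi)\,d\mu$.

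Symmetry is immediate: since scalar multiplication is commutative, $\partial_i l_\xi \cdot \partial_j l_\xi = \partial_j l_\xi \cdot \partial_i l_\xi$ pointwise, hence $h_{ij}=h_{ji}$. Smoothness of $\xi \mapsto h_{ij}(\xi)$ follows from the differentiability built into the definition of a statistical model and, for exponential families, from differentiating under the integral sign, which is justified by the regularity of the normalizing function $\psi$.

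The substance is positive-definiteness. Fix $\xi$ and a nonzero tangent vector $v = \sum_i v^i (\partial/\partial\xi^i)_\xi$. Expanding the quadratic form and using linearity of expectation,
\begin{equation}
  \sum_{i,j} v^i v^j h_{ij} = \mathbb{E}\Big[\big(\textstyle\sum_i v^i \partial_i l_\xi\big)^2\Big] \geq 0,
\end{equation}
so $h_\xi$ is at least positive-semidefinite. To upgrade this to strict positivity I would use the chain-rule identity $\partial_i l_\xi = (\partial_i j)_\xi / p(\cdot;\xi)$, where $(\partial_i j)_\xi$ is the function on $\mathcal{X}$ appearing in the definition of a statistical model, so that $\sum_i v^i \partial_i l_\xi = \big(\sum_i v^i (\partial_i j)_\xi\big)/p(\cdot;\xi)$. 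For an exponential family $p(\cdot;\xi) > 0$ everywhere, so this function vanishes (almost everywhere) precisely when $\sum_i v^i (\partial_i j)_\xi$ does. But condition (2) in the definition of a statistical model asserts that $(\partial_1 j)_\xi,\dots,(\partial_n j)_\xi$ are $\R$-linearly independent as functions on $\mathcal{X}$, so for $v \neq 0$ the combination $\sum_i v^i (\partial_i j)_\xi$ is not the zero function.

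Consequently $\sum_i v^i \partial_i l_\xi$ is nonzero on a set of positive $\mu$-measure, and since $p(\cdot;\xi)>0$ there, the integrand $\big(\sum_i v^i \partial_i l_\xi\big)^2 p(\cdot;\xi)$ is strictly positive on that set; hence the expectation above is strictly positive, giving $h_\xi(v,v) > 0$ for all $v \neq 0$. The main obstacle is precisely this passage from the linear-algebraic independence of the score functions to strict positivity of an integral: one must read ``linearly independent as functions'' in the almost-everywhere sense compatible with $\mu$, and then exploit the strict positivity of $p_\xi$ (automatic for exponential families) to guarantee that the non-vanishing of the integrand survives the weighting by $p_\xi$ and is not concentrated on a $\mu$-null set.
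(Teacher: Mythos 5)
Your proof is correct. Note that the paper itself gives no proof of this theorem: it appears in the background section on information geometry and is quoted as a standard result (following Amari--Nagaoka and Molitor), so there is no argument in the paper to compare against. What you wrote is the standard textbook argument --- symmetry is trivial, positive semidefiniteness follows from writing the quadratic form as $\mathbb{E}\big[\big(\sum_i v^i \partial_i l_\xi\big)^2\big]$, and strict positivity comes from condition (2) in the paper's definition of a statistical model (linear independence of the $(\partial_i j)_\xi$) combined with the everywhere-positivity of exponential-family densities --- and you correctly isolate the one genuine subtlety, namely that ``linearly independent as functions'' must be read modulo $\mu$-null sets for the passage from a nonvanishing integrand to a strictly positive integral to go through; this is exactly the regularity the paper implicitly assumes of its models.
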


A statistical model has more structure than a general Riemannian manifold. It has a two non-metrical connections $\nabla^{(1)}$ and $\nabla^{(-1)}$ that are related in a special way.
 \begin{definition}
   We give the Christoffel symbols of $\nabla^{(1)}$ and $\nabla^{(-1)}$:
   \begin{align}
     \Gamma^{(1)}_{ij,k} &= \mathbb{E}[(\partial_i \partial_j l_\xi)(\partial_k l_\xi)];\\
     \Gamma^{(-1)}_{ij, k}&= \mathbb{E}[(\partial_i \partial_j l_\xi)(\partial_k l_\xi)]+\mathbb{E}[(\partial_i l_\xi \partial_j l_\xi \partial_k l_\xi)].
   \end{align}
 \end{definition}

 The pair of connections satisfy an analogue of the Leibniz law, which we formally state as follows.
 \begin{definition}
   A \textbf{dualistic structure} on a Riemannian manifold $M$ with metric $g$ is a pair $(\nabla, \nabla^*)$ of connections satisfying the compatibility condition with respect to the metric: for all $X,Y,Z\in \Gamma(TM)$,
   \begin{equation}
     Z(g(X,Y)) = h(\nabla_Z X, Y) + h(X, \nabla^*_Z Y).
   \end{equation}
 \end{definition}

\begin{definition}
  A \textbf{statistical manifold} is a Riemannian manifold with a dualistic structure $(\nabla, \nabla^*)$, with both connections non-metrical.
\end{definition}

 \begin{theorem}
   On a statistical model $S$, the two connections $\nabla^{(1)}$ and $\nabla^{(-1)}$ form a dualistic structure with respect to the Fisher metric. That is, a statistical model is a special statistical manifold.
 \end{theorem}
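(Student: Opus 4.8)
The plan is to verify, directly in coordinates, the compatibility condition defining a dualistic structure, with the identification $\nabla=\nabla^{(1)}$ and $\nabla^*=\nabla^{(-1)}$. First I would reduce to the coordinate vector fields. Both sides of
\[
Z\big(h(X,Y)\big)=h\big(\nabla_Z X,Y\big)+h\big(X,\nabla^*_Z Y\big)
\]
are $C^\infty$-linear (tensorial) in $Z$, so it suffices to take $Z=\partial_k$. Writing $X=X^i\partial_i$ and $Y=Y^j\partial_j$ and expanding the left side by the Leibniz rule for $\partial_k$ and the right side by the Leibniz rule for the two connections, the terms carrying $\partial_k X^i$ and $\partial_k Y^j$ agree on the two sides. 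Everything therefore collapses to $X^iY^j$ times the purely coordinate identity
\[
\partial_k h_{ij}=\Gamma^{(1)}_{ki,j}+\Gamma^{(-1)}_{kj,i},
\]
where I use the first-kind convention $\Gamma_{ab,c}=h(\nabla_{\partial_a}\partial_b,\partial_c)$, under which the formulas for $\Gamma^{(1)}$ and $\Gamma^{(-1)}$ in the definitions are exactly the lowered Christoffel symbols of the two connections.

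To prove this identity I would write the Fisher metric as an integral against the density, $h_{ij}=\int_{\mathcal X}(\partial_i l_\xi)(\partial_j l_\xi)\,p(x;\xi)\,d\mu$ with $l_\xi=\log p(x;\xi)$, and differentiate in $\xi^k$ under the integral sign. This produces three terms: two from differentiating the log-derivative factors and one from differentiating $p$ itself. The key simplification is the log-derivative identity $\partial_k p = p\,\partial_k l_\xi$, which converts the density term into a cubic expectation. Collecting the result gives
\[
\partial_k h_{ij}=\mathbb{E}\big[(\partial_k\partial_i l_\xi)(\partial_j l_\xi)\big]+\mathbb{E}\big[(\partial_i l_\xi)(\partial_k\partial_j l_\xi)\big]+\mathbb{E}\big[(\partial_i l_\xi)(\partial_j l_\xi)(\partial_k l_\xi)\big].
\]
Comparing with the definitions, the first expectation is precisely $\Gamma^{(1)}_{ki,j}$, while the remaining two together are precisely $\Gamma^{(-1)}_{kj,i}$ — the extra cubic term of the mixture connection being exactly the contribution of $\partial_k p$. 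This is the required coordinate identity, and by the reduction above it yields the full compatibility condition.

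The computation is short, so the substance lies in two places. The bookkeeping point is the asymmetry between the two slots: the cubic term must be absorbed into $\nabla^{(-1)}$ and not $\nabla^{(1)}$, which is exactly what forces the specific pairing $(\nabla^{(1)},\nabla^{(-1)})$ and explains why $(\nabla^{(1)},\nabla^{(1)})$ would fail. The genuine analytic obstacle is justifying the differentiation under the integral sign and its interchange with $\partial_k$; this relies on the dominated-convergence and uniform-integrability conditions packaged into the standard regularity assumptions on statistical models, and these hold in particular for the exponential families that are our eventual focus, where $l_\xi$ is affine in $\xi$ and the normalizer $\psi$ is smooth. Finally, to conclude that $M$ is a statistical manifold in the stated sense, I would remark that the preceding Fisher-metric theorem supplies the Riemannian structure, the dualistic pair just constructed supplies the compatible connections, and both $\nabla^{(1)}$ and $\nabla^{(-1)}$ are non-metrical in general (they differ from the Levi-Civita connection precisely through the cubic skewness term), so the model is a statistical manifold of the special type asserted.
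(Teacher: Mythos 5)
Your proof is correct. The paper itself does not prove this theorem --- it is stated as background material, quoted from the cited reference of Amari--Nagaoka --- and your argument is precisely the standard one from that literature: reduce by tensoriality to the coordinate identity $\partial_k h_{ij}=\Gamma^{(1)}_{ki,j}+\Gamma^{(-1)}_{kj,i}$, then verify it by differentiating $h_{ij}=\mathbb{E}[\partial_i l_\xi\,\partial_j l_\xi]$ under the integral sign and using $\partial_k p = p\,\partial_k l_\xi$, with the cubic expectation correctly absorbed into the lowered Christoffel symbol of $\nabla^{(-1)}$ (matching the paper's definitions of $\Gamma^{(1)}_{ij,k}$ and $\Gamma^{(-1)}_{ij,k}$), and with the regularity needed for the interchange of derivative and integral acknowledged as a standing assumption.
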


 \begin{definition}
   For a Riemannian manifold with a dualistic structure $(M,g, \nabla, \nabla^*)$, we say $M$ is \textbf{dually flat} if both $\nabla$ and $\nabla^*$ are flat, in the sense that both curvature and torsion vanish.
 \end{definition}

 The main result that we are going to use later is this:
 \begin{theorem}
   An exponential family $(\mathcal{E}, h, \nabla^{(1)}, \nabla^{(-1)})$ is dually flat.
 \end{theorem}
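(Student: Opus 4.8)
The plan is to exploit the canonical coordinates $\xi=(\xi^1,\dots,\xi^n)$ in which the exponential family is already presented, and to show that these are \emph{affine} coordinates for $\nabla^{(1)}$. The starting point is the explicit form of the log-likelihood $l_\xi(x)=C(x)+\sum_i \xi^i F_i(x)-\psi(\xi)$. Differentiating in the parameters gives $\partial_i l_\xi(x)=F_i(x)-\partial_i\psi(\xi)$ and, crucially,
\begin{equation*}
\partial_i\partial_j l_\xi(x)=-\,\partial_i\partial_j\psi(\xi),
\end{equation*}
which is \emph{independent of} $x$. This single observation drives the whole argument.

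First I would establish that $\nabla^{(1)}$ is flat. Since $\partial_i\partial_j l_\xi$ does not depend on $x$, it factors out of the expectation defining the Christoffel symbol of the first kind:
\begin{equation*}
\Gamma^{(1)}_{ij,k}=\mathbb{E}[(\partial_i\partial_j l_\xi)(\partial_k l_\xi)]=-\,\partial_i\partial_j\psi(\xi)\,\mathbb{E}[\partial_k l_\xi].
\end{equation*}
The expected score vanishes, since $\mathbb{E}[\partial_k l_\xi]=\int(\partial_k l_\xi)\,p\,d\mu=\int\partial_k p\,d\mu=\partial_k\int p\,d\mu=0$, using $\partial_k p=p\,\partial_k l_\xi$ and $\int p\,d\mu=1$. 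Hence $\Gamma^{(1)}_{ij,k}\equiv 0$, and the connection coefficients $\Gamma^{(1),k}_{ij}=h^{kl}\Gamma^{(1)}_{ij,l}$ (obtained by raising the last index with the inverse Fisher metric) vanish identically in the $\xi$-chart. A connection whose coefficients vanish throughout some chart has zero curvature and zero torsion, so $\nabla^{(1)}$ is flat.

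It remains to treat $\nabla^{(-1)}$. Its torsion is immediate: both terms in
\begin{equation*}
\Gamma^{(-1)}_{ij,k}=\mathbb{E}[(\partial_i\partial_j l_\xi)(\partial_k l_\xi)]+\mathbb{E}[(\partial_i l_\xi)(\partial_j l_\xi)(\partial_k l_\xi)]
\end{equation*}
are symmetric under $i\leftrightarrow j$, so $\Gamma^{(-1),k}_{ij}=\Gamma^{(-1),k}_{ji}$ and the torsion vanishes in every chart. For the curvature I would invoke the duality identity for a dualistic pair $(\nabla,\nabla^*)$ with respect to $h$,
\begin{equation*}
h\big(R^{(1)}(X,Y)Z,\,W\big)=-\,h\big(Z,\,R^{(-1)}(X,Y)W\big),
\end{equation*}
obtained by applying the compatibility condition $Z\,h(X,Y)=h(\nabla_Z X,Y)+h(X,\nabla^*_Z Y)$ twice and antisymmetrizing. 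Since $h$ is nondegenerate, this forces $R^{(1)}=0\iff R^{(-1)}=0$; as $R^{(1)}=0$ already, we conclude $R^{(-1)}=0$, so $\nabla^{(-1)}$ is flat as well.

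The main obstacle is precisely this curvature duality identity: although standard, it is the only nonformal step, requiring a careful second-order computation from the defining relation of the dualistic structure. An alternative that sidesteps it is to produce the dual affine chart explicitly: the expectation parameters $\eta_i:=\partial_i\psi(\xi)=\mathbb{E}[F_i]$ form a genuine coordinate system because $\partial_i\eta_j=\partial_i\partial_j\psi=h_{ij}$ is the positive-definite Fisher metric, so the Jacobian $\partial\eta/\partial\xi$ is invertible; one then checks, via the transformation law of Christoffel symbols, that $\nabla^{(-1)}$ has vanishing coefficients in the $\eta$-chart. Either route closes the argument, but I would lead with the duality identity, since it is shorter and makes the symmetry between the two flat structures manifest.
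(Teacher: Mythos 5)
Your proof is correct, and since the paper states this theorem as background without proof (it is quoted from the information-geometry literature, cf.\ Amari--Nagaoka), there is no in-paper argument to compare it against; what you give is the standard proof. Both halves check out: the natural parameters are $\nabla^{(1)}$-affine because $\partial_i\partial_j l_\xi$ is independent of $x$ and the expected score $\mathbb{E}[\partial_k l_\xi]$ vanishes, while the flatness of $\nabla^{(-1)}$ follows from the symmetry of $\Gamma^{(-1)}_{ij,k}$ in $i,j$ (torsion) together with the duality identity $h(R^{(1)}(X,Y)Z,W)=-h(Z,R^{(-1)}(X,Y)W)$ (curvature), whose derivation by applying the compatibility relation twice and antisymmetrizing is exactly as you describe, using the paper's earlier theorem that $(\nabla^{(1)},\nabla^{(-1)})$ is a dualistic pair for the Fisher metric.
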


 \subsection{Tangent Bundles of Exponential Families}

 Given an exponential family with its canonical dualistic structure $(\mathcal{E}, h, \nabla^{(1)}, \nabla^{(-1)})$, we consider the geometric structure on its tangent bundle $T\mathcal{E}$. The goal is to show that $T\mathcal{E}$ has a canonical K\"ahler structure. The construction follows \cite{dombrowski1962geometry}, \cite{molitor2013exponential}.

 \begin{theorem}[Dombrowski Splitting Theorem, \cite{dombrowski1962geometry}]
   For a manifold $M$ with an affine connection $\nabla$, 
   \begin{equation}
     T_{v_x}(TM)\simeq T_xM \oplus T_xM,\quad\forall x\in M, v_x\in T_xM.
   \end{equation}
 \end{theorem}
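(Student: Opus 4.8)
The plan is to build an explicit linear isomorphism from $T_{v_x}(TM)$ to $T_xM\oplus T_xM$ by using the connection $\nabla$ to split the tangent space of the total space $TM$ into a \emph{horizontal} and a \emph{vertical} part, each of which will be identified with a copy of $T_xM$. The first copy will come from the bundle projection, and the second from a \emph{connector} map built out of $\nabla$.

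First I would introduce the projection $\pi:TM\to M$ and its differential $d\pi_{v_x}:T_{v_x}(TM)\to T_xM$; this supplies the first factor $T_xM$. Its kernel $V_{v_x}:=\ker d\pi_{v_x}$ is the vertical subspace, consisting of vectors tangent to the fiber $T_xM$. Since each fiber is itself a vector space, there is a canonical identification $V_{v_x}\cong T_xM$ obtained by differentiating the linear structure of the fiber. In induced coordinates $(x^i,v^i)$ on $TM$, writing a tangent vector as $\xi=a^i\,\partial_{x^i}+b^i\,\partial_{v^i}$, we have $d\pi_{v_x}(\xi)=a^i\,\partial_{x^i}|_x$, and the vertical vectors are exactly those with all $a^i=0$.

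Next I would use $\nabla$ to define the connector $K:T_{v_x}(TM)\to T_xM$. Given $\xi$, realize it by a curve $V(t)\in TM$ with $V(0)=v_x$ and $\dot V(0)=\xi$, and set $c=\pi\circ V$, so that $V$ is a vector field along the curve $c$ in $M$. Define $K(\xi)=\frac{\nabla V}{dt}\big|_{t=0}$, the covariant derivative of $V$ along $c$. In the coordinates above this reads $K(\xi)=\big(b^k+\Gamma^k_{ij}(x)\,v^i a^j\big)\partial_{x^k}|_x$. The desired isomorphism is then $(d\pi_{v_x},K):T_{v_x}(TM)\to T_xM\oplus T_xM$. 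Since both sides have dimension $2n$, it suffices to check injectivity: if $d\pi_{v_x}(\xi)=0$ then $a^i=0$, and then $K(\xi)=b^k\,\partial_{x^k}|_x=0$ forces $b^i=0$, so $\xi=0$.

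The main obstacle is the well-definedness of $K$: one must verify that $\frac{\nabla V}{dt}\big|_{t=0}$ depends only on $\xi$, and not on the chosen representing curve $V$, and that it is independent of coordinates. The first point follows because the covariant derivative of a vector field along a curve depends only on the velocity of the curve and the $1$-jet of the field at the point, and both of these data are encoded in $\xi$. The second point follows from the transformation law of the Christoffel symbols, which is precisely what makes the combination $b^k+\Gamma^k_{ij}v^i a^j$ transform as the components of a vector rather than depending on the chart. Once $K$ is established as a well-defined linear map, the splitting $T_{v_x}(TM)=H_{v_x}\oplus V_{v_x}$ with $H_{v_x}:=\ker K$ and $V_{v_x}=\ker d\pi_{v_x}$ is immediate, and $d\pi_{v_x}$ and $K$ restrict to isomorphisms of $H_{v_x}$ and $V_{v_x}$ onto $T_xM$ respectively, yielding the claimed decomposition.
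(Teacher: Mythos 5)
Your proof is correct and takes essentially the same approach as the paper: both produce the splitting via the pair $(d\pi_{v_x}, K)$, where the first factor comes from the bundle projection and the second from the connector map determined by $\nabla$, with the vertical and horizontal subspaces arising as $\ker d\pi_{v_x}$ and $\ker K$. The only cosmetic difference is in how $K$ is defined --- you use the covariant derivative of a representing curve, with the coordinate formula $b^k+\Gamma^k_{ij}v^ia^j$, while the paper uses the limit of difference quotients of parallel transports $\tau$ along radial geodesics back to $x$ --- but these are two standard, equivalent characterizations of the same connector.
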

 The isomorphism $\Phi: T_{v_x}(TM)\to  T_xM\oplus T_xM$ can be written as follows. Let $\pi:TM\to M$ and $\tilde \pi: T(TM)\to TM$. Let $U$ be a ``nice'' neighborhood of $x$; let $\tau:\pi^{-1}(U)\to T_x M$ be the map that takes a tangent vector $\theta'\in T_pM$, where $p\in U$, to an element of $T_x M$, such that $\tau(\theta')$ is the tangent vector obtained from parallel translation through the unique geodesic from $p$ to $x$. Now, let $K:T(TM)\to TM$ be defined as
 \begin{equation}
   K:Z\in T(TM) \mapsto \lim_{t\to 0}\frac{\tau(\tilde Z(t)) - \tilde\pi(Z)}{t},
 \end{equation}
where $\tilde Z(t)\subset TM$ is the curve in $TM$ that realizes $Z$ (i.e. $\tilde Z(0) = \tilde \pi(Z)$, $\frac{d}{dt}\Big|_{t=0} \tilde Z(t)= Z$).
For a point $Z_{\theta_x}$ where $\tilde \pi(Z_{\theta_x})=\theta_x\in TM$ and $\pi(\theta_x)=x\in M$, it is mapped by $\Phi$ to
\begin{equation}
  \Phi: Z_{\theta_x}\in T(TM)\mapsto (\theta_x, (\pi_*)_{\theta_x}(Z_{\theta_x}), K(Z_{\theta_x})).
\end{equation}
The second component is called the horizontal component of $Z_{\theta_x}$, and the third component is  called the vertical component of $Z_{\theta_x}$. With certain abuse of notation, we now denote elements in $T(TM)$ to be the triple $(\theta_x, l_x, v_x)$, where $\theta_x=\tilde\pi(\theta_x, l_x, v_x)$, and $l_x\in T_{\theta_x}M$ (resp. $v_x\in T_{\theta_x}M$) the horizontal (resp. vertical) component of $(\theta_x, l_x, v_x)$.

In particular, we have the direct-sum decomposition of the tangent space at each point $\theta_x\in TM$:
\begin{equation}
T_{\theta_x}(TM) \simeq VT_{\theta_x}(TM) \oplus HT_{\theta_x}(TM),
\end{equation}
where $VT_{\theta_x}(TM)=\ker(\pi_*)_{\theta_x}$ is called the vertical subspace of $T_{\theta_x}(TM)$, and $HT_{\theta_x}(TM)=\ker K$ is the horizontal subspace of $T_{\theta_x}(TM)$.
It is easy to show that $VT_{\theta_x}(TM)\simeq T_{\theta_x}M$ and $HT_{\theta_x}(TM)\simeq T_{\theta_x}M$. This decomposition is important because of the following fact:
\begin{proposition}
  If a even-dimensional real vector space $V$ can be decomposed into the direct sum $V=V_v\oplus V_h$, where $V_v\simeq V_h$, there is a canonical almost complex structure on $V$, i.e. an automorphism $J:V\to V$ such that $J^2=\text{Id}_V$.
\end{proposition}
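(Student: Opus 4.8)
The plan is to build $J$ directly from the isomorphism witnessing $V_v \simeq V_h$ and then verify the defining identity blockwise. First I would fix an isomorphism $\phi : V_v \to V_h$; in the intended application this is the canonical identification of both summands with $T_{\theta_x}M$ coming from the Dombrowski splitting, so no extra data is introduced. Since $V = V_v \oplus V_h$, every $u \in V$ is uniquely $u = v + w$ with $v \in V_v$ and $w \in V_h$, so it is enough to prescribe $J$ on each summand and extend linearly. I would set
\begin{equation}
  J(v + w) = \phi(v) - \phi^{-1}(w), \qquad v \in V_v,\ w \in V_h,
\end{equation}
so that $J$ carries $V_v$ into $V_h$ by $\phi$ and $V_h$ into $V_v$ by $-\phi^{-1}$. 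Linearity of $J$ is immediate since $\phi$, $\phi^{-1}$, and the two projections onto the summands are all linear.

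The key step is computing $J^2$. On $V_v$ one finds $J^2(v) = J(\phi(v)) = -\phi^{-1}(\phi(v)) = -v$, and on $V_h$ one finds $J^2(w) = J(-\phi^{-1}(w)) = -\phi(\phi^{-1}(w)) = -w$. Extending linearly gives $J^2 = -\id_V$, which is the correct normalization for an almost complex structure. In particular $J$ is invertible with inverse $-J$, hence an automorphism of $V$. (The displayed relation $J^2 = \id_V$ in the statement should read $J^2 = -\id_V$; the minus sign is exactly what the asymmetric $\pm$ in the definition of $J$ produces, and it is what makes $J$ behave like multiplication by $i$ rather than a reflection.)

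There is no real obstacle here beyond bookkeeping: the content is the standard fact that an ordered direct sum $V = V_v \oplus V_h$ together with an identification of the summands carries a complex structure. The only point demanding care is the placement of the sign in the definition of $J$ — reversing it yields $-J$, the conjugate structure — and I would note that the convention above (vertical to horizontal by $+\phi$) is the one compatible with Dombrowski's splitting. In this sense the resulting $J$ is canonical: it depends on the ordered decomposition and the isomorphism $\phi$ alone, with no auxiliary choice of basis or inner product.
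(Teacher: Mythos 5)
Your proof is correct and is essentially the paper's own approach: the paper gives no separate argument for this proposition, but simply exhibits the blockwise formula $J_{\theta_x}(\theta_x, l_x, v_x) = (\theta_x, -v_x, l_x)$ a few lines later, which is exactly your $J$ built from the canonical identification of both summands with $T_xM$, and you are right that $J^2 = \text{Id}_V$ in the statement is a typo for $J^2 = -\text{Id}_V$. One caveat: your sign convention is in fact the opposite of the paper's --- its formula sends the horizontal component $l_x$ to the vertical slot with a $+$ and the vertical component $v_x$ to the horizontal slot with a $-$, so your $J$ is the conjugate structure $-J$ of the paper's, and it is the paper's placement of the sign (not yours) that satisfies the compatibility $g(\cdot,\cdot) = \omega(\cdot, J\cdot)$ with the $g$ and $\omega$ written down in Dombrowski's construction.
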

This means that there is a canonical almost complex structure on the manifold $T(TM)$.

If, in addition, we have a Riemannian metric $h$ in the original space $M$ (the affine connection $\nabla$ doesn't need to be the Levi-Civita connection of $h$), then there is a natural almost-Hermitian structure on $TM$:
\begin{align}
  g_{\theta_x}((\theta_x, l_x, v_x), (\theta_x', l_x', v_x')) &= h_x(l_x, l'_x) + h_x(v_x, v'_x);\\
  \omega_{\theta_x}((\theta_x, l_x, v_x), (\theta_x', l_x', v_x')) &= h_x(l_x, v_x') - h_x(v_x, l_x');\\
  J_{\theta_x}(\theta_x, l_x, v_x)&=(\theta_x, -v_x, l_x).
\end{align}
This is exactly analogous to the construction of the K\"ahler structure on $\R^{2n}$.

\begin{proposition}
  For a Riemannian manifold $M$ with a dualistic structure, the canonical almost Hermitian structure that we have constructed on $TM$ is K\"ahlerian if and only if the dualistic structure on $M$ is dually flat.
\end{proposition}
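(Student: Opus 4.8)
The plan is to reduce the Kähler condition to two tensorial identities on the base and then match each with a piece of dual flatness. First I would verify that $(g,J,\omega)$ is a compatible almost Hermitian triple, i.e. that $\omega(X,Y)=g(JX,Y)$ and $g(JX,JY)=g(X,Y)$; both are direct checks in the horizontal/vertical splitting using the defining formulas for $g$, $\omega$, and $J$. Granting compatibility, the standard criterion is that such a manifold is Kähler if and only if its almost complex structure is integrable (vanishing Nijenhuis tensor $N_J$) and its fundamental form is closed ($d\omega=0$). So the proposition becomes the claim that $N_J=0$ and $d\omega=0$ hold simultaneously if and only if $(\nabla,\nabla^*)$ is dually flat.

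To compute these, I would work in the frame on $TM$ adapted to the splitting. Choosing local coordinates $(x^i)$ on $M$ with induced fiber coordinates $(y^i)$, the horizontal lifts are $\delta_i=\partial_{x^i}-\Gamma^k_{ij}y^j\partial_{y^k}$ and the vertical fields are $\partial_{y^i}$; in this frame $J\delta_i=\partial_{y^i}$, $J\partial_{y^i}=-\delta_i$, while $g$ and $\omega$ are given fiberwise by $h_{ij}$. The essential input is the bracket table $[\partial_{y^i},\partial_{y^j}]=0$, $[\delta_i,\partial_{y^j}]=\Gamma^k_{ij}\partial_{y^k}$, and $[\delta_i,\delta_j]$ being vertical and governed by the curvature and torsion of $\nabla$. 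Computing $N_J$ on frame fields, I expect the obstruction to collapse onto the horizontal brackets, since $J$ interchanges horizontal and vertical and the vertical distribution is integrable, so that the components of $N_J$ are expressed through $T^\nabla$ and $R^\nabla$. The conclusion should be $N_J=0\iff T^\nabla=0$ and $R^\nabla=0$, i.e. $\nabla$ is flat.

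Next I would compute $d\omega$, writing $\omega=h_{ij}\,dx^i\wedge\psi^j$ with $\psi^j=dy^j+\Gamma^j_{k\ell}y^\ell dx^k$ the vertical coframe; here the covariant derivative $\nabla h$ enters through $dh_{ij}$ and the curvature/torsion through $d\psi^j$. I would reorganize the outcome using the dualistic identity $Z\,h(X,Y)=h(\nabla_Z X,Y)+h(X,\nabla^*_Z Y)$, so that the vanishing of $d\omega$ is read as the total symmetry of $\nabla h$ (the Codazzi condition) together with a curvature obstruction, equivalently as the flatness of the dual connection $\nabla^*$. Combining the two computations, and invoking the standard dualistic relation $h(R^\nabla(X,Y)Z,W)+h(Z,R^{\nabla^*}(X,Y)W)=0$ (so $R^\nabla=0\iff R^{\nabla^*}=0$) together with the torsion/Codazzi correspondence $T^{\nabla^*}=0\iff\nabla h$ symmetric (given $T^\nabla=0$), the conditions $N_J=0$ and $d\omega=0$ together become exactly $R^\nabla=T^\nabla=R^{\nabla^*}=T^{\nabla^*}=0$, i.e. dual flatness; conversely dual flatness makes every obstruction term vanish.

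The main obstacle is the $d\omega$ computation: the bookkeeping of the curvature and torsion terms arising from $d\psi^j$ and their correct repackaging in terms of $\nabla^*$, and in particular separating the Codazzi symmetry of $\nabla h$ cleanly from the curvature contribution so that the final equivalence lands precisely on \emph{dually flat} rather than on a strictly stronger or weaker condition. The Nijenhuis half, by contrast, I expect to be a routine (if lengthy) frame calculation that reproduces Dombrowski's flatness criterion.
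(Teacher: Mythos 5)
Your outline is correct and is essentially the standard Dombrowski--Molitor argument: the paper itself states this proposition without proof, deferring to the cited references (Dombrowski's integrability criterion $N_J=0\iff T^\nabla=R^\nabla=0$, and Molitor's Corollary 4.3--4.4, where $d\omega=0$ is reduced to the Codazzi/torsion condition on $\nabla^*$ plus a curvature term), and your plan reconstructs exactly that route, including the key duality facts $R^\nabla=0\iff R^{\nabla^*}=0$ and $T^{\nabla^*}=0\iff\nabla h$ totally symmetric when $T^\nabla=0$. Two harmless imprecisions: $[\delta_i,\delta_j]$ is governed by curvature alone (torsion enters $N_J$ only through the mixed brackets), and $d\omega=0$ by itself yields $T^{\nabla^*}=0$ plus only a cyclic curvature identity, not full flatness of $\nabla^*$ --- it is the combination with $N_J=0$ that lands on dual flatness, as your closing paragraph anticipates.
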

In short, a dually flat statistical manifold has a K\"ahlerian tangent bundle.
\begin{corollary}
  The tangent bundle $T\mathcal{E}$ of an exponential family $\mathcal{E}$ is a K\"ahler manifold.
\end{corollary}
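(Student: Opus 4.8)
The plan is to chain together the two results immediately preceding the statement, since this corollary is their direct composition. The tangent bundle $T\mathcal{E}$ is equipped, via the Dombrowski construction, with a canonical almost Hermitian structure $(g, \omega, J)$ built from a chosen affine connection together with the Fisher metric $h$. The Proposition just stated characterizes exactly when this structure is Kählerian: if and only if the underlying dualistic structure on $\mathcal{E}$ is dually flat. So the whole task reduces to verifying that the canonical dualistic structure $(\nabla^{(1)}, \nabla^{(-1)})$ of the exponential family is dually flat with respect to $h$.

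This, however, is precisely the content of the earlier Theorem asserting that an exponential family $(\mathcal{E}, h, \nabla^{(1)}, \nabla^{(-1)})$ is dually flat. First I would invoke the prior Theorem guaranteeing that, for an exponential family, the pair $(\nabla^{(1)}, \nabla^{(-1)})$ does form a dualistic structure with respect to $h$, so that $\mathcal{E}$ is genuinely a statistical manifold and the Dombrowski machinery applies to $T\mathcal{E}$. Then I would invoke the dual-flatness Theorem to conclude that both connections are flat; concretely, in the $\xi$-coordinates supplied by the exponential parametrization the log-likelihood $\log p(x;\xi)$ is affine in $\xi$, so the Christoffel symbols $\Gamma^{(1)}_{ij,k}$ vanish identically and $\nabla^{(1)}$ is flat. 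With dual flatness established, the ``if'' direction of the Proposition immediately yields that the canonical almost Hermitian structure on $T\mathcal{E}$ is Kählerian, which is the assertion.

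I do not expect a genuine obstacle here, as all the substantive work lies in the Proposition and the dual-flatness Theorem that have already been stated. The only point requiring care is bookkeeping: the Dombrowski splitting depends on a single connection, so one must confirm that the Proposition's ``dually flat'' hypothesis is the right compatibility condition for the connection used in defining $g$, $\omega$, and $J$. Since for a dualistic structure the flatness of $\nabla$ is equivalent to the flatness of its dual $\nabla^*$, selecting $\nabla^{(1)}$ (the exponential connection, which is manifestly flat in the exponential coordinates) makes this transparent and completes the argument.
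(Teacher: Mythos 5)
Your proof is correct and takes essentially the same route as the paper, which presents this corollary as an immediate consequence of the Proposition (the Dombrowski almost Hermitian structure on $TM$ is K\"ahlerian if and only if $M$ is dually flat) combined with the Theorem that an exponential family $(\mathcal{E},h,\nabla^{(1)},\nabla^{(-1)})$ is dually flat. One parenthetical inaccuracy worth noting, though it does not affect the argument since you already cite the dual-flatness Theorem: the log-likelihood $l_\xi = C(x)+\sum_i \xi^i F_i(x)-\psi(\xi)$ is not affine in $\xi$ (because of $\psi$); the vanishing of $\Gamma^{(1)}_{ij,k}$ in the natural parameters follows instead from the fact that $\partial_i\partial_j l_\xi = -\partial_i\partial_j\psi(\xi)$ is deterministic together with $\mathbb{E}[\partial_k l_\xi]=0$.
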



\bigskip
Finally we show that the symplectic structure on $T\mathcal{E}$ is the same as the canonical symplectic structure on the cotangent bundle $T^*\mathcal{E}$, through the isomorphism given by the Fisher information metric $h$ on $\mathcal{E}$. The exponential connection $\nabla^{(1)}$ on $\mathcal{E}$ is flat, so we can take, at each point $x\in \mathcal{E}$, a local chart $\{q^1,\dots, q^n\}$ with vanishing Christoffel symbols.
Then in the induced chart on the tangent bundle $T\mathcal{E}$, $\{q^1,\dots, q^n,v^1,\dots, v^n\}$ corresponds to the tangent vector $\sum_{i}v^i\frac{\partial}{\partial q^i}\in T_{(q_1,\dots, q_n)}\mathcal{E}$. By Dombrowski's construction, the symplectic form associated to this chart is 
\begin{align}
  \omega_{Dom} = \sum_i h_{ij}\,dv^i\land dq^i,
\end{align}
where $h_{ij}$ are the components of the Fisher information metric.

On the other hand, the canonical symplectic form on the cotangent bundle $T^*\mathcal{E}$ is, locally
\begin{align}
  \omega_{T^*\mathcal{E}} = \sum_{i}dp_i\land dq^i,
\end{align}
and the isomorphism between $T\mathcal{E}$ and $T^*\mathcal{E}$ induced by the metric $h$ sends the canonical symplectic form to
\begin{align}
  \omega_{TM}=\sum_{i}h_{ij}\,dv^i\land dq^i=\omega_{Dom}.
\end{align}
In particular, we see that for any flat connection on $\mathcal{E}$, including the mixture connection, the Dombrowski construction gives the same symplectic form. Therefore for an exponential family, using either exponential connection or mixture connection would give us the same symplectic form.

On the other hand, the almost complex structures and the Riemannian metrics given by different flat conections are different, since the almost complex structure depends on the horizontal subspace determined by the connection. The Riemannian metrics are not even conformally equivalent, in general (e.g take two vectors, one horizontal in exponential connection and one horizontal in mixture connection, with the same horizontal component but different nonzero vertical component).

\section{The Tangent Bundle of a Statistical Manifold}

We consider a statistical manifold $M$ on a measure space $\mathcal{X}$ with measure $\mu$. We assume that probability measures in $M$ are absolutely continuous with respect to $\mu$, and thus we can identify all probability measures in $M$ by their density with respect to $\mu$.
We denote the Fisher information metric on $M$ by $h$, and the $\alpha$-connections by $\nabla^{(\alpha)}$. On the tangent bundle $TM$ of $M$, we denote the almost Hermitian structure obtained by Dombrowski's construction (using $h$ and the exponential connection $\nabla^{(1)}$) by $(g,J,\omega)$.

In \cite{molitor2012remarks}, Molitor notes that for the statistical manifold of all positive probability density functions $\mathcal{P}_n^\times$ on the set of $n$ points (with counting measure), we can define a map $\tau: T\mathcal{P}_n^\times \to \mathbb{P}(\C^n)$ that preserves the almost Hermitian structure of $T\mathcal{P}_n^\times$, where we consider $\Proj(\C^n)$ as a K\"ahler manifold equipped with the Fubini-Study metric. Here we consider a simple generalization of the map for the tangent bundle $TM$, and show that the almost Hermitian structure is preserved under this map. Most of the computation in this section is a simple generalization of Molitor's computation for probability measures on finite point sets in e.g. \cite{molitor2012remarks,molitor2013exponential} with some modifications.

For an element $(p,v)\in TM$ where $v\in T_pM$, let $p_t$ be a curve in $M$ that realizes $v$, i.e. such that $p_0=p$ and $\dot p_t\vert_{t=0}=v$. We can identify $v$ as a function $v:\mathcal{X}\to \R$ given by 
\begin{align}
  \ds p(x) v(x)=\frac{d}{dt}\Big|_{t=0} [p_t(x)]. 
\end{align}
Note that $\ds \int_{\mathcal{X}}v(x)\,p(x)d\mu=0$ if $\ds\int_{\mathcal{X}}\frac{d}{dt}\Big|_{t=0}[p_t(x)]d\mu=\frac{d}{dt}\Big|_{t=0}\int_{\mathcal{X}} p_t(x) d\mu$, which we shall always assume for our $M$.
\bigskip

Let $\mathcal{L}^2(\mathcal{X})$ be the space of complex-valued $L^2$ functions on $\mathcal{X}$ with inner product $\ds \langle f,g\rangle=\int_\mathcal{X} \bar f g \,d\mu$, let $L^2(\mathcal{X})$ be the corresponding Hilbert space (by identifying almost everywhere equal functions), and let $SL^2(\mathcal{X})$ be the unit sphere of $L^2(\mathcal{X})$. 
In general, $L^2(\mathcal{X})$ is an infinite dimensional Hilbert space. We think of $L^2(\mathcal{X})$ as a infinite-dimensional manifold with tangent spaces at each point canonically isomorphic to $L^2(\mathcal{X})$ itself with the same Hermitian inner product.

For each element $q\in \mathcal{L}^2(\mathcal{X})$, we think of it as a mapping $q:\mathcal{X}\to \C$, so we can define functionals $L_x:\mathcal{L}^2(\mathcal{X})\to \C$ for each $x\in \mathcal{X}$ where $L_x(q)=q(x)$.
We define a mapping $f:S\to \mathcal{L}^2(\mathcal{X})$ from an arbitrary manifold $S$ to $\mathcal{L}^2(\mathcal{X})$ to be differentiable if and only the mappings $L_x\circ f:S\to \C$ are differentiable for all $x\in \mathcal{X}$. If the map $f$ is nice (e.g. the condition for the Lebesgue dominated convergence theorem is satisfied for all difference quotients), then composed with the projection $\mathcal{L}^2(\mathcal{X})\to L^2(\mathcal{X})$, the map $S\to L^2(\mathcal{X})$ is differentiable in the Gâteaux sense.

For simplicity, in our formulation we will circumvent issues related to the infinite-dimensionality of the spaces, leaving out analytic details related to general infinite-dimensional manifolds, e.g. general notion of differentiability on Banach manifolds, if irrelevant to our specific examples (see e.g. \cite{ay2017information} for reference on related issues in information geometry), since the more important aspect here is the metrics of the spaces. 

For example if the statistical model given by $M\to \mathcal{P}(\mathcal{X})$ is nice, the mapping $\Phi:TM\to SL^2(\mathcal{X})$ defined by
\begin{align}
  [\Phi(p,v)](x)=\sqrt{p(x)}\exp(\frac{i v(x)}{2}),\,\,\,\,\,\,\,\, x\in \mathcal{X}.
\end{align}
is Gâteaux differentiable. 

Next, we can consider the (non-positive definite) inner products of $\mathcal{L}^2(\mathcal{X})$ as tangent spaces at each point on $\mathcal{L}^2(\mathcal{X})$ as a metric on the infinite-dimensional manifold, and we write $\ds \int_\mathcal{X} \bar q_1 q_2\,d\mu=g_{L^2}(q_1,q_2)+i\omega_{L^2}(q_1,q_2)$. 
Since integration is insensitive to measure zero sets, the same metric can be defined on $L^2(\mathcal{X})$ by projection, which is now positive definite and can be thought of as a K\"ahler metric.
We will then blur the distinction between $\mathcal{L}^2(\mathcal{X})$ and $L^2(\mathcal{X})$; we subsequently understand all point-evaluations and differentiations of curves to take place on $\mathcal{L}^2(\mathcal{X})$, and the evaluations of metric can be projected to $L^2(\mathcal{X})$.

Given the map $\Phi:TM\to L^2(\mathcal{X})$, for two curves $(p_t,v_t),(\tilde p_t, \tilde v_t)$ in $TM$ (where $p_t,\tilde p_t\in M, v_t\in T_{p_t}M, \tilde v_t\in T_{\tilde p_t}M$, where we identify $v,\tilde v$ as the functions $v(x),\tilde v(x)$ on $\mathcal{X}$ as defined before, and $p_0(x) = \tilde p_0(x)=:p(x), v_0(x)=\tilde v_0(x)=:v(x)$), we obtain the pull-back of $g_{L^2}$ and $\omega_{L^2}$ by
\begin{align*}
  &\Phi^* g_{L^2} (\frac{d}{dt}\Big|_{t=0}(p_t, v_t), \frac{d}{dt}\Big|_{t=0}(\tilde p_t, \tilde v_t)) = g_{L^2}(\frac{d}{dt}\Big|_{t=0}\Phi(p_t, v_t), \frac{d}{dt}\Big|_{t=0}\Phi(\tilde p_t, \tilde v_t))\\
  &=\Re \int_{\mathcal{X}}\bigg(\overline{\frac{d}{dt}\Big|_{t=0}\bigg(\sqrt{p_t(x)}\exp(\frac{iv_t(x)}{2})}\bigg)
  \bigg(\frac{d}{dt}\Big|_{t=0}\bigg(\sqrt{\tilde p_t(x)}\exp(\frac{i\tilde v_t(x)}{2})\bigg)\, d\mu
  \stepcounter{equation}\tag{\theequation}
\end{align*}
 Writing $\ds w_1(x)=\frac{1}{2}\Big(\frac{d}{ds}\Big|_{t=0} p_t(x)\Big)\Big/ p(x)$ and $\ds w_2(x)=\frac{1}{2}\Big(\frac{d}{ds}\Big|_{t=0} v_t(x)\Big)$, and similarly $\tilde w_1(x)$ and $\tilde w_2(x)$, we obtain
\begin{align*}
   \Phi^* g_{L^2} (\frac{d}{dt}\Big|_{t=0}(p_t, v_t), \frac{d}{dt}\Big|_{t=0}(\tilde p_t, \tilde v_t)) &= \Re\int_{\mathcal{X}} (\overline{w_1+iw_2})(\tilde w_1+i\tilde w_2)\, p(x)d\mu(x),\\
   &=\int_{\mathcal{X}} (w_1\tilde w_1+w_2\tilde w_2) \,p(x)d\mu(x),
   \stepcounter{equation}\tag{\theequation}\label{L2metric}
\end{align*}
and similarly
\begin{align}
     &\Phi^* \omega_{L^2} (\frac{d}{dt}\Big|_{t=0}(p_t, v_t), \frac{d}{dt}\Big|_{t=0}(\tilde p_t, \tilde v_t)) = \int_{\mathcal{X}} (w_1\tilde w_2 - w_2\tilde w_1) \,p(x)d\mu(x).\label{L2form}
\end{align}

\bigskip
Now we consider the projective space $\Proj L^2(\mathcal{X})\simeq (L^2(\mathcal{X}) - \{0\})/U(1)$ of the Hilbert space $L^2(\mathcal{X})$, and we consider an infinite-dimensional version of the Fubini-Study metric (we imitate the formulation of the metric in \cite{molitor2013exponential}, in which the metric is defined for $\C\Proj^n$).
Given a point $z_0\in SL^2(\mathcal{X})$, locally the projective space looks like the orthogonal complement $z_0^\perp$ the subspace $\C z_0$ in $L^2(\mathcal{X})$:
\begin{align}
  \phi_{z_0}:\{[z]\in \Proj L^2(\mathcal{X}) \mid \langle z_0,z\rangle\neq 0\}\to z_0^\perp;\,\,\,\,\,\, [z]\mapsto \frac{1}{\langle z_0, z\rangle}z - z_0.
\end{align}
We define the Fubini-Study metric $g_{FS}$ and symplectic form $\omega_{FS}$ on $\Proj L^2(\mathcal{X})$ by taking the real and imaginary components of the inner products of $z_0^\perp$ respectively, i.e. for $\xi_1,\xi_2\in z_0^\perp$,
\begin{align}
  \langle \xi_1,\xi_2\rangle_{L^2} = ((\phi_{z_0}^{-1})^* g_{FS})(\xi_1,\xi_2) + i  ((\phi_{z_0}^{-1})^* \omega_{FS}) (\xi_1,\xi_2).
\end{align}

Now denote the projection map from $L^2(\mathcal{X})-\{0\}$ to the projective space by $\pi_{\Proj}:L^2(\mathcal{X})-\{0\}\to \Proj L^2(\mathcal{X})$. For two curves $z_1(t),z_2(t)$ on $SL^2(\mathcal{X})$ such that $z_1(0)=z_2(0)=z_0$, we compute the Fubini-Study metric between $\ds \frac{d}{dt}\Big|_{t=0}[z_1(t)]$ and $\ds \frac{d}{dt}\Big|_{t=0}[z_2(t)]$:
\begin{align*}
  &g_{FS}(\frac{d}{dt}\Big|_{t=0}[z_1(t)],\frac{d}{dt}\Big|_{t=0}[z_2(t)]) + i\omega_{FS}(\frac{d}{dt}\Big|_{t=0}[z_1(t)],\frac{d}{dt}\Big|_{t=0}[z_2(t)])\\
  &= \Big\langle\frac{d}{dt}\Big|_{t=0}
  \phi_{z_0}([z_1(t)]),\frac{d}{dt}\Big|_{t=0}
  \phi_{z_0}([z_2(t)])\Big\rangle
  =\Big\langle \dot z_1(0) - z_0\langle z_0, \dot z_1(0)\rangle, \dot z_2(0) - z_0\langle z_0, \dot z_2(0)\rangle\Big\rangle\\
  &= \langle \dot z_1(0),\dot z_2(0)\rangle - \langle \dot z_1(0), z_0\rangle \langle z_0,\dot z_2(0)\rangle.   \stepcounter{equation}\tag{\theequation}\label{fsmetric}
\end{align*}

\bigskip
We now restrict our attention to an exponential family $M$. In particular, the following result is shown by Molitor in \cite{molitor2013exponential}:

\begin{lemma}[Corollary 4.3 and 4.4 in \cite{molitor2013exponential}]
  When $M$ is dually flat (e.g. when $M$ is an exponential family), $TM$ is a K\"ahler manifold with the almost Hermitian structure given by Dombrowski's construction.
\end{lemma}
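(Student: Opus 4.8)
Since the Lemma is the forward implication of the Proposition above (a Dombrowski almost Hermitian structure is K\"ahlerian if and only if the base is dually flat), combined with the Theorem that every exponential family is dually flat, the most economical proof is simply to invoke these two results. The plan, however, is to indicate the direct verification, which makes the geometry transparent. Recall that a compatible triple $(g,J,\omega)$ is K\"ahler precisely when, in addition to the pointwise compatibilities $g(J\cdot,J\cdot)=g(\cdot,\cdot)$ and $\omega(\cdot,\cdot)=g(J\cdot,\cdot)$, the almost complex structure $J$ is integrable and the fundamental two-form $\omega$ is closed. The two compatibilities are immediate from Dombrowski's definitions: $J$ interchanges the horizontal and vertical copies of $T_xM$ with a sign, and $g$ is the orthogonal direct sum $h\oplus h$, so both identities reduce to the symmetry of $h$. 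The content of the lemma is thus the integrability of $J$ and the closedness of $\omega$, and I would show that dual flatness is exactly what delivers them.

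For integrability I would pass to flat coordinates. Because $M$ is dually flat, $\nabla^{(1)}$ is flat and torsion-free, so about each point there is a chart $\{q^1,\dots,q^n\}$ with vanishing Christoffel symbols; in the induced chart $\{q^i,v^i\}$ on $TM$ the horizontal distribution is spanned by the coordinate fields $\partial/\partial q^i$ and the vertical by $\partial/\partial v^i$, with no connection correction. Hence $J(\partial/\partial q^i)=\partial/\partial v^i$ and $J(\partial/\partial v^i)=-\partial/\partial q^i$ have constant coefficients, and introducing $z^j=q^j+i\,v^j$ one checks directly that $J$ acts as multiplication by $i$. Since the transition maps between two such affine charts are complex-affine in the $z^j$, these coordinates form a holomorphic atlas and the Nijenhuis tensor of $J$ vanishes. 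This step is exactly where the flatness (vanishing curvature) of $\nabla^{(1)}$ enters; the torsion already vanishes because $\nabla^{(1)}$ is a statistical connection.

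For closedness I would use the Hessian structure. In the same affine coordinates a dually flat statistical manifold carries a convex potential $\psi$ with $h_{ij}=\partial_i\partial_j\psi$ --- for an exponential family $\psi$ is the log-partition function. Using the expression $\omega=\omega_{Dom}=\sum_{i,j}h_{ij}\,dv^i\wedge dq^j$ computed above, a short calculation in the coordinates $z^j$ shows that $\omega$ equals, up to a nonzero constant, $i\,\partial\bar\partial(\psi\circ\pi)$; thus $\omega$ admits $\psi\circ\pi$ as a local K\"ahler potential and in particular $d\omega=0$. Equivalently, writing $d\omega$ out leaves the antisymmetrization of $\partial_k h_{ij}=\partial_i\partial_j\partial_k\psi$, which vanishes by the total symmetry of this cubic (Amari--Chentsov) tensor. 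Combining integrability, closedness, and the compatibilities yields that $(g,J,\omega)$ is K\"ahler.

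The main obstacle --- the real use of the dually flat hypothesis --- is the existence of the potential $\psi$ with $h=\mathrm{Hess}(\psi)$, equivalently the total symmetry of $\partial_k h_{ij}$ in $\nabla^{(1)}$-affine coordinates; this is what simultaneously forces $d\omega=0$ and identifies $\psi\circ\pi$ as a K\"ahler potential. I would also remark that, for a statistical manifold, flatness of $\nabla^{(1)}$ and of its dual $\nabla^{(-1)}$ are equivalent (their curvatures are related by $h(R^{(1)}(X,Y)Z,W)=-h(Z,R^{(-1)}(X,Y)W)$), so the single hypothesis of dual flatness is what the argument needs throughout. Finally, the fully self-contained route can be bypassed by citing the Proposition together with the Theorem that exponential families are dually flat, from which the lemma is immediate.
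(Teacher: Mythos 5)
Your proposal is correct. Note that the paper itself contains no argument for this lemma: it is imported verbatim from Molitor (Corollaries 4.3 and 4.4), and inside the paper it is exactly the conjunction of the Section 2 proposition (Dombrowski's almost Hermitian structure on $TM$ is K\"ahlerian if and only if $M$ is dually flat) with the theorem that exponential families are dually flat --- which is the ``economical'' route you name in your opening and closing paragraphs. So on that level you match the paper; the body of your proposal then supplies the direct verification that the paper outsources to Molitor, and it is sound: compatibility of $(g,J,\omega)$ reduces to symmetry of $h$; flatness and torsion-freeness of $\nabla^{(1)}$ give affine charts in which the horizontal lift of $\partial/\partial q^i$ is $\partial/\partial q^i$ itself (no Christoffel correction), so $J$ has constant coefficients, the coordinates $z^j=q^j+iv^j$ patch by complex-affine maps, and the Nijenhuis tensor vanishes; and closedness of $\omega=\sum_{i,j}h_{ij}\,dv^i\wedge dq^j$ comes down to the total symmetry of $\partial_k h_{ij}=\partial_i\partial_j\partial_k\psi$, which is the Hessian/Codazzi structure of a dually flat metric in $\nabla^{(1)}$-affine coordinates. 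What your longer route buys is transparency: it isolates exactly where each ingredient of dual flatness enters (vanishing curvature for integrability of $J$, the cubic-tensor symmetry for $d\omega=0$), it exhibits $\psi\circ\pi$ --- the pulled-back log-partition function --- as a local K\"ahler potential, and your remark that flatness of $\nabla^{(1)}$ forces flatness of $\nabla^{(-1)}$ via $h(R^{(1)}(X,Y)Z,W)=-h(Z,R^{(-1)}(X,Y)W)$ correctly explains why the single hypothesis suffices; none of this is visible in the citation. One cosmetic point: the paper's displayed formula $\omega_{Dom}=\sum_i h_{ij}\,dv^i\wedge dq^i$ carries an index typo, and your corrected expression $\sum_{i,j}h_{ij}\,dv^i\wedge dq^j$ is the one the closedness computation actually requires.
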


\bigskip
Now we show that the mapping $\Phi$ composed with the projection $\pi_\Proj: L^2(\mathcal{X})\to \Proj L^2(\mathcal{X})$ preserves the K\"ahler structure:

\begin{proposition}[Compare Proposition 3.3 in \cite{molitor2012remarks}]\label{propfs}
  If $M$ an exponential family, the map $\pi_\Proj\circ\Phi:TM\to \Proj L^2(\mathcal{X})$ is such that $(\pi_{\Proj}\circ\Phi)^* g_{FS}=\frac{1}{4}g, (\pi_{\Proj}\circ\Phi)^*\omega_{FS}=\frac{1}{4}\omega$, where $g$ and $\omega$ are obtained from Dombrowski's construction.
\end{proposition}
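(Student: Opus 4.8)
The plan is to read off both pullbacks from the three formulas already in hand --- the pulled-back $L^2$ metric \eqref{L2metric}, the pulled-back $L^2$ form \eqref{L2form}, and the Fubini--Study expression \eqref{fsmetric} --- and to verify that the only gap between $\Phi^*g_{L^2}$ and $\tfrac14 g$ is an error term that projectivization removes. First I would note that $\Phi$ lands in the unit sphere, since $\int_{\mathcal{X}}|\Phi(p,v)|^2\,d\mu=\int_{\mathcal{X}}p\,d\mu=1$; hence the image curves $z_1(t)=\Phi(p_t,v_t)$ and $z_2(t)=\Phi(\tilde p_t,\tilde v_t)$ lie on $SL^2(\mathcal{X})$ with common base point $z_0=\Phi(p,v)$, and \eqref{fsmetric} applies directly. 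Differentiating yields $\dot z_1(0)=(w_1+iw_2)z_0$ and $\dot z_2(0)=(\tilde w_1+i\tilde w_2)z_0$, so the leading term $\langle\dot z_1(0),\dot z_2(0)\rangle$ in \eqref{fsmetric} is precisely $\Phi^*g_{L^2}+i\,\Phi^*\omega_{L^2}$ as computed in \eqref{L2metric}--\eqref{L2form}.

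Next I would dispose of the symplectic statement, which I expect to be clean. Since $\int_{\mathcal{X}}w_1\,p\,d\mu=\tfrac12\frac{d}{dt}\big|_{0}\int_{\mathcal{X}}p_t\,d\mu=0$, the quantity $\langle z_0,\dot z_1(0)\rangle=\int_{\mathcal{X}}(w_1+iw_2)\,p\,d\mu=i\int_{\mathcal{X}}w_2\,p\,d\mu$ is purely imaginary, and likewise for the second curve; hence the correction $\langle\dot z_1(0),z_0\rangle\langle z_0,\dot z_2(0)\rangle$ is a real number and contributes nothing to the imaginary part of \eqref{fsmetric}. Writing $l=\dot p_0/p$ for the horizontal component and $m$ for the vertical component (both as score functions on $\mathcal{X}$), formula \eqref{L2form} collapses to $\tfrac14\big(h(l,\tilde m)-h(m,\tilde l)\big)$ once the constant fiber shifts are killed by $\int_{\mathcal{X}}l\,p\,d\mu=\int_{\mathcal{X}}\tilde l\,p\,d\mu=0$, and this is exactly $\tfrac14$ of the Dombrowski form $\omega=h(l,\tilde m)-h(m,\tilde l)$. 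Thus $(\pi_\Proj\circ\Phi)^*\omega_{FS}=\Phi^*\omega_{L^2}=\tfrac14\omega$.

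The metric identity is where I expect the real obstacle. The subtlety is that $w_2=\tfrac12\dot v_0$ differentiates the score function representing $v_t$, which is \emph{not} the score of the Dombrowski vertical component $K=\tfrac{Dv_t}{dt}\big|_0$: in $\nabla^{(1)}$-flat coordinates $\{\xi^i\}$ with $p(x;\xi)=\exp\!\big(C+\sum_i\xi^iF_i-\psi\big)$, the vertical component has score $\sum_i\dot v_0^i(F_i-\partial_i\psi)$, whereas differentiating $v_t(x)=\sum_i v_t^i\,(F_i-\partial_i\psi(\xi_t))$ contributes an extra constant $-\sum_{i,j}v^i(\partial_i\partial_j\psi)\dot\xi_0^j=-h(v,l)$, using the exponential-family identity $\partial_i\partial_j\psi=h_{ij}$. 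Carrying this constant through \eqref{L2metric} gives $\Phi^*g_{L^2}=\tfrac14 g+\tfrac14\,h(v,l)\,h(v,\tilde l)$, while the same constant makes $\int_{\mathcal{X}}w_2\,p\,d\mu=-\tfrac12 h(v,l)$, so the real correction evaluates to $\langle\dot z_1(0),z_0\rangle\langle z_0,\dot z_2(0)\rangle=\tfrac14\,h(v,l)\,h(v,\tilde l)$. Subtracting, $(\pi_\Proj\circ\Phi)^*g_{FS}=\Phi^*g_{L^2}-\tfrac14\,h(v,l)\,h(v,\tilde l)=\tfrac14 g$, which would complete the proof. The crux is therefore the correct identification of the vertical component through the exponential connection, together with the observation that the Hessian-of-$\psi$ discrepancy it produces is annihilated exactly by the passage to $\Proj L^2(\mathcal{X})$.
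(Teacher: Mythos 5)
Your proof is correct and follows essentially the same route as the paper's: both express the image velocities as $\dot z(0)=(w_1+iw_2)z_0$, identify the constant offset in $w_2$ arising from differentiating the log-normalizer (your $-h(v,l)$, obtained via $\partial_i\partial_j\psi=h_{ij}$, is exactly the paper's $-\frac{d}{dt}\big|_{t=0}\partial_v\psi(\theta_t)$), and check that the correction term $\langle \dot z_1(0),z_0\rangle\langle z_0,\dot z_2(0)\rangle$ in \eqref{fsmetric} cancels precisely this contribution. The only differences are organizational: the paper verifies the three pairings of pure horizontal and pure vertical vectors separately, whereas you treat general tangent vectors in a single computation and settle the symplectic identity immediately by observing that the correction term is real.
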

\begin{proof}
  First, we compute phase of the map $\Phi$ explicity for an exponential family $M$. Suppose that $(\theta^1, \dots, \theta^n)$ is the natural parameter of the exponential family and the probability density functions in $M$ are of the form
  \begin{align}
    p_\theta(x)=\exp{C(x)+\sum_{k=1}^n\theta^k F_k(x)-\psi(\theta)}
  \end{align}
  for measurable $C(x),F_1(x),\dots, F_n(x)$ on $\mathcal{X}$ such that $1,F_1,\dots, F_n$ are linearly independent, and $\psi$ a function on the parameter space. Now consider $(p,v)\in TM$. Say the tangent vector $v$ based at $p$ has coordinate $(v^k)$ in the natural parameters. Then the function on $\mathcal{X}$ corresponding to $v$ is
  \begin{align*}
    v(x) &= \Big(\frac{d}{dt}\Big|_{t=0} \exp{C(x)+\sum_{k=1}^n(\theta^k+tv^k) F_k(x) -\psi([\theta^k+tv^k])}\Big)\Big/p_\theta(x)\\
    &= \sum_{k=1}^n v^k F_k(x) -\partial_v \psi(\theta).\stepcounter{equation}\tag{\theequation}
  \end{align*}

  For every vertical vector on $TM$ based at $(p,v)\in TM$, we can realize it by a curve $(p, v+tu)$ for some $u\in T_pM$. This $u$ is the vertical component of the vector. So it can be easily computed that $w_1=0$ and $w_2(x)=\frac{1}{2} u(x)$.
  Moreover, 
  \begin{align*}
    \Big\langle \frac{d}{dt}\Big|_{t=0}\Phi(p, v+tu), \Phi(p,v)\Big\rangle &= \Big\langle \sqrt{p(x)}\exp(\frac{iv(x)}{2})\frac{iu(x)}{2}, \sqrt{p(x)}\exp(\frac{iv(x)}{2})\Big\rangle\\&=\int_{\mathcal{X}} \frac{iu(x)}{2}\, p(x)d\mu=0.  \stepcounter{equation}\tag{\theequation}
  \end{align*}

  For every horizontal vector $u$ on $TM$ based at $(p,v)\in TM$, we first pick a curve $p_t$ in $M$ such that $\ds \frac{d}{dt}\Big|_{t=0} p_t$ is the horizontal component of the chosen horizontal vector, and $p_0=p$. 
  Now the natural parameters $(\theta_1,\dots, \theta_n)$ of the exponential family $M$ is an affine coordinate of the exponential connection.
  We pick the curve $(p_t, v_t)$ in $TM$ where the coordinate of the tangent vector $v_t$ is constant with respect to $t$ in the natural parameters, so this curve is horizontal. 
  Now 
  \begin{align}
    \frac{d}{dt}\Big|_{t=0} v_t(x)=-\frac{d}{dt}\Big|_{t=0}\partial_v \psi(\theta_t).
  \end{align}
  Note that this is constant on $\mathcal{X}$. So for the horizontal vector $w_1(x)=\frac{1}{2}u(x), w_2(x)\equiv -\frac{1}{2}\frac{d}{dt}\big|_{t=0}\partial_v\psi(\theta_t)$. Moreover,
  \begin{align*}
    &\Big\langle \frac{d}{dt}\Big|_{t=0}\Phi(p_t, v_t), \Phi(p,v)\Big\rangle \\
    &= \Big\langle \sqrt{p(x)}\exp(\frac{iv(x)}{2}) (w_1(x)+iw_2(x))
    , \sqrt{p(x)}\exp(\frac{iv(x)}{2})\Big\rangle\\
    &=\int_{\mathcal{X}} \frac{iu(x)}{2}\, p(x)d\mu- \frac{i}{2}\int_{\mathcal{X}} \frac{d}{dt}\Big|_{t=0}\partial_v \psi(\theta_t)  \, p(x)d\mu=-\frac{i}{2}\frac{d}{dt}\Big|_{t=0} \partial_v \psi(\theta_t).  
    \stepcounter{equation}\tag{\theequation}
  \end{align*}
\bigskip

  We can now compare the Fubini-Study K\"ahler metric with the K\"ahler metric from Dombrowski's construction. We compute $(\pi_{\Proj} \circ \Phi)^* g_{FS}$ and $(\pi_{\Proj} \circ \Phi)^* \omega_{FS}$ on horizontal and vertical vectors using equations (\ref{L2metric}), (\ref{L2form}), (\ref{fsmetric}), and compare them with Dombrowski's construction.
  Let $v, u,\tilde u\in T_pM$, and suppose $\theta_t,\tilde \theta_t$ parameterize curves in $M$ that realize $u,\tilde u$ respectively. Write $C=-\frac{d}{dt}\big|_{t=0}\partial_{v}\psi(\theta_t), \tilde C=-\frac{d}{dt}\big|_{t=0}\partial_{v}\psi(\tilde \theta_t)$. $C$ and $\tilde C$ are constant on $\mathcal{X}$.
  \begin{itemize}
    \item For two vertical vectors $w,\tilde w\in T_{(p,v)}TM$ with vertical components $u,\tilde u\in T_pM$ respectively, 
    \begin{align}
      (\pi_{\Proj} \circ \Phi)^* g_{FS}(w,\tilde w)=\frac{1}{4} \int_{\mathcal{X}} u(x)\tilde u(x) \, p(x)d\mu; \,\,\,(\pi_{\Proj} \circ \Phi)^* \omega_{FS}(w,\tilde w) = 0.
    \end{align}

    \item For two horizontal vectors $w,\tilde w\in T_{(p,v)}TM$ with horizontal components $u,\tilde u\in T_pM$ respectively, 
    \begin{align*}
      (\pi_{\Proj} \circ \Phi)^* g_{FS}(w,\tilde w)&=\frac{1}{4} \bigg(\int_{\mathcal{X}} u(x)\tilde u(x)\, p(x)d\mu +C\tilde C - (iC)(\overline{i\tilde C})\bigg)\\
      &=\frac{1}{4}\int_{\mathcal{X}} u(x)\tilde u(x)\, p(x)d\mu;\\
      (\pi_{\Proj} \circ \Phi)^* \omega_{FS}(w,\tilde w) &= \frac{1}{4}\int_{\mathcal{X}} \big(\tilde C u(x)-C\tilde u(x) \big)\, p(x)d\mu=0.\stepcounter{equation}\tag{\theequation}
    \end{align*}

    \item For a vertical vector $w\in T_{(p,v)}TM$ with vertical component $u\in T_pM$ and a horizontal vector $\tilde w\in T_{(p,v)}TM$ with horizontal component $\tilde u \in T_pM$,
    \begin{align*}
      (\pi_\Proj \circ \Phi)^*g_{FS}(w,\tilde w) &= \frac{1}{4}\int_{\mathcal{X}} \tilde C u(x)\, p(x)d\mu =0;\\
      (\pi_{\Proj} \circ \Phi)^* \omega_{FS}(w,\tilde w) &= \frac{1}{4}\int_{\mathcal{X}} u(x)\tilde u(x)\, p(x)d\mu.
      \stepcounter{equation}\tag{\theequation}
    \end{align*}
  \end{itemize}
  The results all agree with the K\"ahler metric from Dombrowski's construction (other than an $1/4$ factor), so we have proved the proposition.
\end{proof}

This computation establishes a connection between the Fisher metric in information geometry and the Fubini-Study metric in complex geometry.

\bigskip
\section{Statistical Transformation Models and Complex Projective Spaces}

We now consider a statistical transformation model $M$ on $\mathcal{X}$ with acting group $G$, i.e.
\begin{itemize}
  \item $M$ is a statistical manifold on the sample space $\mathcal{X}$;
  \item There is an action of $G$ on the sample space $\mathcal{X}$; 
  \item The action of $G$ on $M$ induced by the pushforward of the action of $G$ on $\mathcal{X}$ is invariant on $M$, i.e. for any probability measure $P\in M$, the probability measure $gP$ defined by
  \begin{align}
    (gP)(A) = P(g^{-1}A)
  \end{align}
  for any measureable set $A$ in $\mathcal{X}$.
  Furthermore we assume the action of $G$ on $M$ is transitive.
\end{itemize}
  In addition, we assume that $\mathcal{X}$ is a Riemannian manifold with the geometric measure $\mu$ invariant under the action of $G$ on $\mathcal{X}$, and we assume that all probability measures in $M$ are absolutely continuous with respect to the geometric measure.
  In this way, if $p$ is a density function (with respect to $\mu$) for a probability measure $P$ in $M$, then $p(g^{-1}x)$ is the density function of $gP$ with respect to $\mu$. We usually assume that the density function is smooth on $\mathcal{X}$.


\subsection{The Induced Action on Complex Projective Spaces}

  We consider $L^2(\mathcal{X})$ as a Hilbert manifold as before.
  Now consider the trivial $U(1)$-principal bundle over $\mathcal{X}$. The space of $L^2$ sections of this bundle can be viewed as a group $\mathcal{G}$ with fiber-wise multiplication, and there is an action of $\mathcal{G}$ on the unit sphere $SL^2(\mathcal{X})$ of the Hilbert manifold $L^2(\mathcal{X})$. The orbit space of this action can be identified to be $\mathcal{P}(\mathcal{X})$, the space of all probability density functions over the geometric measure of $\mathcal{X}$, and the orbit projection map $\pi: SL^2(\mathcal{X})\to \mathcal{P}(\mathcal{X})$ can be identified to be $\pi(q)=p\in \mathcal{P}(X)$ where $p(x)=|q(x)|^2$. In particular, consider the subgroup of the bundle $\mathcal{G}$ consisting only of constant sections; this subgroup can be identified with $U(1)$.
  Therefore the projection map $\pi: SL^2(\mathcal{X})\to \mathcal{P}(\mathcal{X})$ can be factorized into a projection $\pi_{\Proj}:SL^2(\mathcal{X})\to \Proj L^2(\mathcal{X})$ and then a map $\pi_{\mathcal{P}}:\Proj L^2(\mathcal{X})\to \mathcal{P}(\mathcal{X})$. 

  Now we consider a lift of the action of $G$ on $M$ to an action of $G$ on the image of the map $\pi_\Proj \circ \Phi$. Note that $\Im\pi_\Proj \circ \Phi\subseteq \pi_\mathcal{P}^{-1} (M)$.
  Write $N=\Im \pi_{\Proj} \circ \Phi$. An element in $N$ can be identified with a class $[q(x)]$ in $\Proj L^2(\mathcal{X})$ where $q(x)\in L^2(\mathcal{X})$.
  For $g\in G$, we let $g[q(x)]=[q(g^{-1}x)]$. This is well-defined, and respects the group action. We will further show that $g[q(x)]\in N$ for every $g\in G, [q(x)]\in N$. For this, we consider the action of $G$ on $TM$ given by $g(p,v)=(gp, g_*v)$ where $(p,v)\in TM$ and we regard $g$ as a diffeomorphism on $M$ and write $g_*v$ to be pushforward of $v$ under $g$.


  \begin{lemma} $N$ is invariant under the action of $G$, and the map $\pi_{\Proj}\circ \Phi$ is $G$-equivariant.
  \end{lemma}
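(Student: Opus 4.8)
The plan is to prove the stronger fact that the unprojected map $\Phi:TM\to SL^2(\mathcal{X})$ is already equivariant for the left-translation action $(g\cdot q)(x)=q(g^{-1}x)$ on $L^2(\mathcal{X})$, and then to descend along $\pi_\Proj$. Both assertions of the lemma will then reduce to a single bookkeeping identity: tracking how the vertical datum $v$ transforms under $g$.

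First I would describe $g\cdot(p,v)=(gp,g_*v)$ explicitly as a pair of functions on $\mathcal{X}$. The base part is recorded earlier in this section: the pushforward density is $(gp)(x)=p(g^{-1}x)$, using $G$-invariance of $\mu$. For the tangent part, pick a curve $p_t$ in $M$ realizing $v$, so $p_0=p$ and $p(x)v(x)=\frac{d}{dt}\big|_{t=0}p_t(x)$. Since $g$ acts on $M$ as a diffeomorphism, $g_*v$ is realized by the pushed-forward curve $gp_t$, whose densities are $p_t(g^{-1}x)$. Differentiating at $t=0$ and dividing by $(gp)(x)=p(g^{-1}x)$ yields the function associated to $g_*v$:
\[
  (g_*v)(x)=\frac{\frac{d}{dt}\big|_{t=0}p_t(g^{-1}x)}{p(g^{-1}x)}=\frac{p(g^{-1}x)\,v(g^{-1}x)}{p(g^{-1}x)}=v(g^{-1}x).
\]
This identity is the crux of the argument.

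Granting it, equivariance of $\Phi$ is immediate from the defining formula:
\[
  \Phi(gp,g_*v)(x)=\sqrt{(gp)(x)}\,\exp\!\Big(\tfrac{i(g_*v)(x)}{2}\Big)=\sqrt{p(g^{-1}x)}\,\exp\!\Big(\tfrac{iv(g^{-1}x)}{2}\Big)=\Phi(p,v)(g^{-1}x),
\]
that is, $\Phi(g\cdot(p,v))=g\cdot\Phi(p,v)$. The $G$-invariance of $\mu$ also gives $\|g\cdot q\|_{L^2}=\|q\|_{L^2}$, so left translation preserves $SL^2(\mathcal{X})$ and the image lands where it should. Because left translation is linear it commutes with the $U(1)$-scaling defining $\pi_\Proj$; hence $\pi_\Proj$ intertwines the two $G$-actions, and composing gives $(\pi_\Proj\circ\Phi)(g\cdot(p,v))=g\cdot(\pi_\Proj\circ\Phi)(p,v)$, which is the asserted equivariance. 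Invariance of $N$ then follows formally: a point of $N$ is $(\pi_\Proj\circ\Phi)(p,v)$ for some $(p,v)\in TM$, and by equivariance its $g$-translate equals $(\pi_\Proj\circ\Phi)(g\cdot(p,v))$; since $M$ is $G$-invariant, $g\cdot(p,v)=(gp,g_*v)$ again lies in $TM$, so the translate lies in $N$.

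The main obstacle is the tangent-level identity $(g_*v)(x)=v(g^{-1}x)$: one must check that the identification of a tangent vector with a function on $\mathcal{X}$ is compatible with the pushforward $g_*$, that the realizing curve for $g_*v$ may legitimately be taken to be $gp_t$, and that differentiation commutes with the measure-preserving substitution $x\mapsto g^{-1}x$. Once this is secured, everything else is formal, relying only on $\mu$ being $G$-invariant and on $\pi_\Proj$ being a $U(1)$-quotient that commutes with left translation.
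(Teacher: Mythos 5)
Your proposal is correct and follows essentially the same route as the paper: both hinge on the identity $(g_*v)(x)=v(g^{-1}x)$, established by differentiating the pushed-forward curve $gp_t$ with densities $p_t(g^{-1}x)$, from which equivariance of $\Phi$ (hence of $\pi_{\Proj}\circ\Phi$) and the $G$-invariance of $N$ follow directly. Your additional remarks on $\mu$-invariance preserving $SL^2(\mathcal{X})$ and left translation commuting with the $U(1)$-quotient make explicit steps the paper leaves implicit, but the argument is the same.
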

  \begin{proof}
    Given $(p,v)\in TM$ and $g\in G$, let $p_t$ be a curve in $M$ realizing $v$. The tangent vector $g_*v\in T_{gp}M$ can be identified with the function 
    \begin{align}
      (g_* v)(x) = \frac{\frac{d}{dt}\big|_{t=0} gp_t(x)}{gp(x)}=\frac{\frac{d}{dt}\big|_{t=0}p_t(g^{-1}x)}{p(g^{-1}x)}=v(g^{-1}x).
    \end{align}
    Now $\ds \Phi(g(p,v))=\sqrt{p(g^{-1}x)}\exp(\frac{iv(g^{-1}x)}{2})=g\Phi(p,v)$. This shows that $N=\Im \pi_{\Proj}\circ\Phi$ is invariant under the action of $G$ and that $\pi_{\Proj}\circ\Phi$ is a $G$-equivariant map.
  \end{proof}

  We notice that $G$ acts by isometry (with respect to Fisher metric) on $M$:

  \begin{lemma}[Chapter 3.8 of \cite{barndorff2012parametric}, or Proposition 2.2 and 2.3 of \cite{fernandes2000fisher}]
    The action of $G$ on $M$ induced from $G$ acting on $\mathcal{X}$ preserves both the Fisher metric and the exponential connection. \label{lemmaActByIsom}
  \end{lemma}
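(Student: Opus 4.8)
The plan is to verify both invariance statements directly from the description of the induced $G$-action on $TM$ established in the previous lemma, namely $g\cdot(p,v)=(gp,g_*v)$ with $(gp)(x)=p(g^{-1}x)$ and $(g_*v)(x)=v(g^{-1}x)$, where a tangent vector $v\in T_pM$ is identified with its score function $v(x)=\frac{d}{dt}\big|_{t=0}\log p_t(x)$. In this identification the Fisher metric reads $h_p(v,w)=\int_{\mathcal{X}}v(x)\,w(x)\,p(x)\,d\mu$, so the whole argument reduces to understanding how the pullback operator $R_g\colon f\mapsto f\circ g^{-1}$ interacts with integration against $p\,d\mu$ and with differentiation along curves.

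For the Fisher metric I would simply compute
\[
h_{gp}(g_*v,g_*w)=\int_{\mathcal{X}}v(g^{-1}x)\,w(g^{-1}x)\,p(g^{-1}x)\,d\mu(x)
\]
and change variables by $y=g^{-1}x$. Because $\mu$ is the geometric measure and $G$ acts by isometries, $\mu$ is $G$-invariant, so $d\mu(x)=d\mu(y)$ and the right-hand side becomes $\int_{\mathcal{X}}v(y)\,w(y)\,p(y)\,d\mu(y)=h_p(v,w)$. Thus $g^*h=h$; the only input is the $G$-invariance of $\mu$, together with the facts (from the previous lemma) that $gp$ still has density $p(g^{-1}\cdot)$ and that $g_*$ acts by the same pullback.

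For the exponential connection I would use its intrinsic ``e-representation'' characterization rather than Christoffel symbols. Identify $T_pM$ with the subspace of mean-zero functions in $L^2(p)$ via the score map $v\mapsto v(x)$; then $\nabla^{(1)}_XY$ is computed by differentiating the score representation of $Y$ along $X$ and taking the Fisher-orthogonal projection back onto the tangent space (this reproduces $\Gamma^{(1)}_{ij,k}=\mathbb{E}[(\partial_i\partial_j l)(\partial_k l)]$, since $\langle \partial_i\partial_j l,\partial_k l\rangle_p=\Gamma^{(1)}_{ij,k}$). The key observation is that $R_g$ is linear, commutes with differentiation in the curve parameter $t$ (it acts pointwise in the $\mathcal{X}$-variable by a fixed diffeomorphism), preserves the property of being mean-zero, and---by exactly the change of variables above---preserves the Fisher inner product, hence commutes with the orthogonal projection. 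Since every ingredient entering $\nabla^{(1)}$ is intertwined by $R_g$, we obtain $g_*\nabla^{(1)}=\nabla^{(1)}$. Equivalently, one may argue through geodesics: an $e$-geodesic is a curve with $\log p_t$ affine in $t$ up to normalization, $R_g$ visibly sends such curves to such curves preserving the affine parameter, and since $\nabla^{(1)}$ is torsion-free ($\Gamma^{(1)}_{ij,k}$ is symmetric in $i,j$) it is determined by its geodesics, so $g$ is an affine transformation.

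I expect the main obstacle to be analytic rather than algebraic: making the change of variables, the $L^2(p)$ inner products, and the orthogonal projection rigorous when $\mathcal{X}$ is an arbitrary Riemannian manifold and the ambient space is infinite dimensional, and confirming that $G$-invariance of the geometric measure gives exactly the isometry property of $R_g$ needed for the projection to be intertwined. Once $R_g$ is recognized as a Fisher-isometric intertwiner that commutes with parameter-differentiation and preserves mean-zero functions, both invariance statements become formal; the entire content sits in the invariance of $\mu$.
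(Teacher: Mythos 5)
The paper itself contains no proof of this lemma: it is imported from the literature (Chapter 3.8 of \cite{barndorff2012parametric}, or Propositions 2.2 and 2.3 of \cite{fernandes2000fisher}), so there is no internal argument to compare against. Your self-contained proof is correct, and it is essentially the standard argument underlying those citations. For the Fisher metric, the change of variables $y=g^{-1}x$ together with $G$-invariance of $\mu$ is indeed the whole content, and the identification $(g_*v)(x)=v(g^{-1}x)$ you invoke is legitimately available: the paper establishes it (by the same computation you sketch) in the proof of the equivariance lemma for $\pi_{\Proj}\circ\Phi$ that precedes this statement. For the exponential connection, your characterization of $\nabla^{(1)}$ --- differentiate the score representation of $Y$ along $X$, then project Fisher-orthogonally onto the span of the scores --- does reproduce $\Gamma^{(1)}_{ij,k}=\mathbb{E}[(\partial_i\partial_j l)(\partial_k l)]$ (the Leibniz term $X^i(\partial_i Y^j)\,\partial_j l$ is already tangent, so the projection only acts on $\partial_i\partial_j l$, whose pairing with $\partial_k l$ gives exactly the Christoffel symbols of the first kind), and since $R_g\colon L^2(p)\to L^2(gp)$ is a linear isometry mapping scores at $p$ onto scores at $gp$ and commuting with $t$-differentiation, it commutes with these projections and hence intertwines $\nabla^{(1)}$. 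This proves affinity of $g$ in the lemma's full stated generality (an arbitrary statistical transformation model), which is what the paper needs for Lemma \ref{lemmaTMpreserved}. One caveat: your alternative geodesic argument is not valid in that generality. The description of $\nabla^{(1)}$-geodesics as curves with $\log p_t$ affine in $t$ up to normalization holds only when $M$ is $e$-autoparallel (e.g.\ an exponential family); for a general statistical manifold the induced connection involves the projection, and its geodesics need not be log-affine. Since the paper's later applications are to exponential transformation models this alternative would suffice there, but for the lemma as stated you should lean on your primary projection/intertwining argument, which is sound.
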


  Motivated by this lemma and Proposition \ref{propfs}, we consider the Fubini-Study metric of $N$ under the action of $G$. For this we first consider the action of $G$ on $TM$.

  \begin{lemma}
    The action of $G$ on $TM$ preserves the K\"ahler metric given by Dombrowski's construction.\label{lemmaTMpreserved}
  \end{lemma}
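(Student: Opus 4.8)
The plan is to exploit the naturality of Dombrowski's construction. The almost Hermitian structure $(g,J,\omega)$ on $TM$ is assembled from exactly two pieces of data on $M$: the Fisher metric $h$ and the exponential connection $\nabla^{(1)}$, the latter determining the horizontal/vertical splitting of $T(TM)$ and the former pairing the components. By Lemma \ref{lemmaActByIsom}, each $g\in G$, viewed as a diffeomorphism of $M$, preserves both $h$ and $\nabla^{(1)}$. Since the action on $TM$ is precisely the tangent lift $Tg:TM\to TM$, $(p,v)\mapsto(gp,g_*v)$, it suffices to show that the differential $(Tg)_*$ intertwines the splitting and respects the bilinear data; I expect this to follow formally from the two invariance properties.

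First I would check that $(Tg)_*$ preserves the vertical subspace. Differentiating the identity $\pi\circ Tg=g\circ\pi$ gives $\pi_*\circ(Tg)_*=g_*\circ\pi_*$, so $\ker\pi_*=VT_{\theta_x}(TM)$ is carried into $VT_{Tg(\theta_x)}(TM)$. This step uses no connection data. Next, for the horizontal subspace I would use that $g$ preserves $\nabla^{(1)}$: a connection-preserving diffeomorphism carries geodesics to geodesics and intertwines parallel transport, i.e. $(dg)\circ\tau_p^x=\tau_{gp}^{gx}\circ(dg)$, where $\tau$ denotes parallel translation along the connecting geodesic as in the definition of the connector $K$. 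Feeding this into the definition of $K$ from the excerpt yields the key intertwining $K\circ(Tg)_*=Tg\circ K$, whence $\ker K=HT_{\theta_x}(TM)$ is sent to $HT_{Tg(\theta_x)}(TM)$. Combining the two, in the triple notation $(Tg)_*$ sends $(\theta_x,l_x,v_x)$ to $(Tg(\theta_x),(dg)_x l_x,(dg)_x v_x)$; that is, it acts diagonally as $(dg)_x\oplus(dg)_x$ on the splitting $T_xM\oplus T_xM$.

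Finally, since $g$ is an $h$-isometry we have $h_{gx}((dg)_x a,(dg)_x b)=h_x(a,b)$, and substituting into the defining formulas for $g$, $\omega$, and $J$ one reads off directly that all three are preserved by $(Tg)_*$. For $J$ this is immediate, since $(dg)_x\oplus(dg)_x$ visibly commutes with the swap $(l_x,v_x)\mapsto(-v_x,l_x)$; for $g$ and $\omega$ it is the isometry relation applied to each of the two $h$-pairings appearing in their definitions.

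The main obstacle is the horizontal step: verifying $K\circ(Tg)_*=Tg\circ K$ carefully requires tracking how $Tg$ acts on a curve $\tilde Z(t)\subset TM$ realizing a given $Z\in T(TM)$ and then invoking the commutation of $(dg)$ with parallel transport inside the difference quotient defining $K$. This is exactly the place where the preservation of $\nabla^{(1)}$ (rather than merely of $h$) is indispensable. The metric-level identities, by contrast, are routine once the diagonal action on the splitting has been established.
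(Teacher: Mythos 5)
Your proposal is correct and takes essentially the same approach as the paper: both invoke Lemma \ref{lemmaActByIsom} to show that the lifted action preserves the horizontal/vertical splitting of $T(TM)$ and acts diagonally as $g_*\oplus g_*$ on it, then read off preservation of $g$, $\omega$, and $J$ from the $h$-isometry property and the defining formulas of Dombrowski's construction. The only cosmetic difference is that the paper verifies connector-equivariance case-by-case on realizing curves, using $K\big(\frac{d}{dt}(p_t,v_t)\big)=\frac{D^{(e)}}{dt}v_t$ together with the affine-transformation property of $g$, whereas you package the same fact as the single identity $K\circ(Tg)_*=Tg\circ K$ obtained from the commutation of $dg$ with parallel transport.
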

  \begin{proof}
    For a vertical vector $u^v\in T_{(p,v)}TM$ with vertical component $u\in T_pM$, we consider its horizontal and vertical component after the pushforward of the diffeomorphism $g\in G$. Denote the projection map of the tangent bundle to be $\pi: TM\to M$. The vertical vector $u^v$ can be realized by a curve $(p,v+tu)$ in $TM$. The horizontal component of the pushforwarded vector is
    \begin{align}
      \pi_* g_* (u^v) = \frac{d}{dt}\Big|_{t=0} \pi(g(p,v+tu))=0.
    \end{align}
    Let $K:TTM\to TM$ be the connector map for the exponential connection of $M$. The vertical component of the pushforwarded vector is
    \begin{align}
      K\big(g_*(u^v)\big) = K\big(\frac{d}{dt}\Big|_{t=0}(gp,g_*v+tg_*u)\big)=g_*u.
    \end{align}
    Therefore $g_*(u^v)$ is a vertical vector with vertical component $g_*u$.

    For a horizontal vector $u^h\in T_{(p,v)}TM$ with horizontal component $u\in T_pM$, $u^h$ can be realized by the horizontal lift to $TM$ of a curve $p_t$ in $M$ that realizes $u$. Denote the lifted curve to be $(p_t,v_t)\in TM$.
    The horizontal component of the pushforwarded vector is
    \begin{align}
      \pi_* g_* (u^h) = \frac{d}{dt}\Big|_{t=0} \pi(g(p_t, v_t))=\frac{d}{dt}\Big|_{t=0}gp_t=g_*u.
    \end{align}
    The vertical component of the pushforwarded vector is 
    \begin{align}
      K\big(g_*(u^h)\big) =K\frac{d}{dt}\Big|_{t=0} g(p_t,v_t)=\frac{D^{(e)}}{dt}\Big|_{t=0} g(p_t,v_t)=g_*\frac{D^{(e)}}{dt}\Big|_{t=0}(p_t,v_t)=0
    \end{align}
    where $\frac{D^{(e)}}{dt}$ is the covariant derivative for the exponential connection on $M$, and the second to last equality holds because as a diffeomorphism of $M$, $g$ is an affine transformation with respect to the exponential connection (Lemma \ref{lemmaActByIsom}). Therefore $g_*(u^h)$ is a horizontal vector with horizontal component $g_*u$.

    Finally, since by Lemma \ref{lemmaActByIsom} $G$ acts on $M$ by isometry, we know that $h_{gp}(g_*u, g_*\tilde u)=h_p(u,\tilde u)$ for $h$ the Fisher metric on $M$, $g\in G$, and $u,\tilde u\in T_pM$.
    By Dombrowski's construction of the K\"ahler metric on $TM$, the action of $G$ on $TM$ preserves the K\"ahler metric.
  \end{proof}

  Now this action of $G$ on $TM$ preserving K\"ahler metric transfers, via the $G$-equivariant map $\pi_\Proj\circ\Phi$ which also preserves the K\"ahler structure (up to a constant factor), to an action of $G$ on $N$. Write the Fubini-Study K\"ahler metric as $G_{FS}=g_{FS}+i\omega_{FS}$, and the K\"ahler metric from Dombrowski's construction as $G_{Dom}=g_{Dom}+i\omega_{Dom}$.

  \begin{corollary}
    The action of $G$ on $N$ preserves the Fubini-Study K\"ahler metric, in the sense that $G_{FS}(g_*w,g_*\tilde w)=G_{FS}(w,\tilde w)$ for all $w,\tilde w\in  (\pi_{\Proj}\circ \Phi)_*\big[T_{(p,v)}TM\big], g\in G, (p,v)\in TM$.
  \end{corollary}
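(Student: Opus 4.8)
The plan is to obtain this purely formally by chaining the three preceding results, with essentially no new computation. Write $F=\pi_{\Proj}\circ\Phi\colon TM\to N$ and fix $(p,v)\in TM$ and $g\in G$. Since the statement only concerns $G_{FS}$ evaluated on vectors in the image $F_*\big[T_{(p,v)}TM\big]$, I would first represent the given $w,\tilde w$ as $w=F_*W$ and $\tilde w=F_*\tilde W$ for suitable $W,\tilde W\in T_{(p,v)}TM$; this is exactly what the hypothesis $w,\tilde w\in F_*\big[T_{(p,v)}TM\big]$ provides, and it is also what makes the induced $G$-action on $N$ act meaningfully on these tangent vectors.

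Next I would differentiate the equivariance relation. The earlier lemma states $F(g\cdot z)=g\cdot F(z)$ for all $z\in TM$; taking differentials at $z=(p,v)$ gives the intertwining identity $F_*\circ g_*=g_*\circ F_*$ between the tangent maps, so that $g_*w=F_*(g_*W)$ and $g_*\tilde w=F_*(g_*\tilde W)$, where now $g_*W,\,g_*\tilde W\in T_{g\cdot(p,v)}TM$.

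The computation then closes in one line. Applying Proposition \ref{propfs} as a pointwise identity at the base point $g\cdot(p,v)$,
\[
G_{FS}(g_*w,g_*\tilde w)=G_{FS}\big(F_*(g_*W),F_*(g_*\tilde W)\big)=\tfrac14\,G_{Dom}(g_*W,g_*\tilde W).
\]
By Lemma \ref{lemmaTMpreserved} the Dombrowski metric is $G$-invariant on $TM$, so $G_{Dom}(g_*W,g_*\tilde W)=G_{Dom}(W,\tilde W)$; running Proposition \ref{propfs} backwards at $(p,v)$ returns $\tfrac14\,G_{Dom}(W,\tilde W)=G_{FS}(F_*W,F_*\tilde W)=G_{FS}(w,\tilde w)$, which is the assertion.

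The argument involves no genuine obstacle: the only points deserving care are that Proposition \ref{propfs} be invoked as a pointwise statement at both $(p,v)$ and $g\cdot(p,v)$ (legitimate, since the factor $\tfrac14$ is uniform over all base points), and that the induced action on $N$ be well defined on image tangent vectors (which is precisely the content of the equivariance lemma). As a consistency remark worth recording, the identity $F^*G_{FS}=\tfrac14\,G_{Dom}$ together with the nondegeneracy of $G_{Dom}$ forces $F_*$ to be injective, so the representation of $w,\tilde w$ as pushforwards is unambiguous.
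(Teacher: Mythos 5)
Your proposal is correct and takes essentially the same approach as the paper: both arguments chain the pointwise identity of Proposition \ref{propfs} (applied at $(p,v)$ and at $g\cdot(p,v)$), the $G$-invariance of the Dombrowski metric from Lemma \ref{lemmaTMpreserved}, and the differentiated equivariance relation $(\pi_{\Proj}\circ\Phi)_*\circ g_*=g_*\circ(\pi_{\Proj}\circ\Phi)_*$. Your closing remark on injectivity of the pushforward is extra (and not needed, since the final expression depends only on $w,\tilde w$ themselves), but harmless.
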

  \begin{proof}
    Given two $u,\tilde u\in T_{(p,v)}TM$, from Proposition \ref{propfs} we know that 
    \begin{align}
      G_{FS}((\pi_{\Proj}\circ\Phi)_* u),((\pi_{\Proj}\circ\Phi)_*\tilde u))=(\pi_\Proj\circ\Phi)^*G_{FS}(u,\tilde u)=\frac{1}{4}G_{Dom}(u,\tilde u)
    \end{align}
    and that 
    \begin{align}
    G_{FS}((\pi_{\Proj}\circ\Phi)_*(g_*u), (\pi_{\Proj}\circ\Phi)_*(g_*\tilde u))=
    (\pi_{\Proj}\circ\Phi)^*G_{FS}(g_*u,g_*\tilde u) =\frac{1}{4}G_{Dom}(g_*u,g_*\tilde u).       
    \end{align}
    But by Lemma \ref{lemmaTMpreserved} $G_{Dom}(g_*u,g_*\tilde u)=G_{Dom}(u,\tilde u)$. Finally, since $\pi_\Proj\circ\Phi$ is $G$-equivariant,
    \begin{align}
      (\pi_\Proj\circ\Phi)_*(g_*u)= (\pi_\Proj\circ\Phi\circ g)_*u=(g\circ\pi_\Proj\circ\Phi)_*u=g_*(\pi_\Proj\circ\Phi)_*u.
    \end{align}
    and similarly for $\tilde u$. Therefore we can conclude that
    \begin{align}
      G_{FS}(g_*w,g_*\tilde w)=G_{FS}(w,\tilde w),
    \end{align}
    for all $w,\tilde w\in  (\pi_{\Proj}\circ \Phi)_*\big[T_{(p,v)}TM\big]$. 
  \end{proof}




\section{Moment Maps and The Orbit Method}

We first briefly recall the constructions of induced representations and Kirillov's orbit method that gives a correspondence between the coadjoint orbits of a group and unitary irreducible representations of the group.  We then proceed to prove that in certain situations, the left-translation unitary representation of $G$ in $L^2(\mathcal{X})$ contains as subrepresentations unitary irreducible representations corresponding to the coadjoint orbits in the image of the moment map of the action of the group on the tangent bundle of an exponential transformation model (if the action is Hamiltonian).

\subsection{Induced Representation}

We describe the construction of induced representations in a special situation pertaining to our setting (see \cite{corwin2004representations}; for the general version of this construction, see \cite{folland2016course}). 
Given a locally compact group $G$ and a closed subgroup $H\leq G$, we assume the homogeneous space $G/H$ possesses an invariant measure $\mu$ (which is indeed the assumption for a statistical transformation model). Suppose $H$ has a unitary representation $\sigma$ on the Hilbert space $\mathcal{H}_\sigma$. We want to construct out of this representation $\sigma$ on $H$ a unitary representation $\pi=\text{ind}_H^G(\sigma)$ on $G$. We first construct a Hilbert space $\mathcal{H}_\pi$ consisting of all Borel measurable functions $f:G\to \mathcal{H}_\sigma$ such that:
\begin{itemize}
  \item Contravariance: for any $h\in H, g\in G, f(gh)=\sigma(h^{-1})f(g)$;
  \item Square integrable: 
  \begin{align}\label{square_integrable}
    \int_{G/H} \norm{f(g)}^2 \, d\mu(gH) <\infty.
  \end{align}
  Note that $\norm{f(g)}^2$ is constant on cosets by the contravariance property, so the integral (\ref{square_integrable}) is well-defined.
\end{itemize}
 We define an inner product on the set $\mathcal{H}_\pi$:
 \begin{align}
   \langle f, \tilde f\rangle = \int_{G/H} \langle f(g), \tilde f(g)\rangle_{\mathcal{H}_\sigma} \, d\mu (gH).
 \end{align}
 Again, this integral is well-defined by the contravariance property. Thus the set $\mathcal{H}_\pi$ is made into a Hilbert space, and then we can define the representation $\pi=\text{ind}_H^G(\sigma)$ by
 \begin{align}\label{lefttransrep}
   [\pi(g) f](x)= f(g^{-1} x).
 \end{align}
 for any $g\in G, x\in G/H$. By the $G$-invariance property of the measure $\mu$, $\pi$ is a unitary representation of $G$ with Hilbert space $\mathcal{H}_\pi$.

 \subsection{Coadjoint Orbits and Kirillov's Orbit Method}\label{orbitMethod}

Let $G$ be a nilpotent Lie group.
Let $\mathfrak{g}^*$ be the dual Lie algebra of $G$. Kirillov's orbit method gives a bijective correspondence between orbits of the coadjoint action of $G$ on $\mathfrak{g}^*$ and unitary irreducible representations of $G$. We describe this correspondence, and for details of Kirillov's orbit method we refer to e.g. \cite{kirillov2004lectures,corwin2004representations,folland2016course} (the presentation of this section mainly follows \cite{folland2016course}).

Let $\lambda \in \mathfrak{g}^*$. We write $\mathcal{O}_\lambda$ for its coadjoint orbit. Let $\mathfrak{h}$ be a maximal subalgebra in $\mathfrak{g}$ such that $\lambda$ restricts to $0$ on $[\mathfrak{h},\mathfrak{h}]$ (see e.g. Theorem 2.2.1 in \cite{corwin2004representations} for the existence of such a subalgebra). We can then define a one-dimensional representation $\sigma_\lambda$ as follows: for a nilpotent Lie group $G$, $H=\exp \mathfrak{h}$ is a connected subgroup with Lie algebra $\mathfrak{h}$ ($\exp$ is an analytic diffeomorphism for a nilpotent Lie group); let $\sigma_\lambda:H\to \C^*$ be defined as
\begin{align}\label{1dunirep}
  \sigma_\lambda(\exp X) = e^{2\pi i \lambda(X)},
\end{align}
for any $\exp X\in H, X\in \mathfrak{h}$.
This is indeed a unitary representation by the fact that $\lambda\vert_{[\mathfrak{h},\mathfrak{h}]}=0$ and Baker–Campbell–Hausdorff formula.

Then we can associate with $\lambda$ the representation $\text{ind}_{H}^G(\sigma_\lambda)$ on $G$. Our construction depends on a choice of maximal subalgebra $\mathfrak{h}\subseteq \mathfrak{g}$ such that $\lambda$ vanishes on $[\mathfrak{h},\mathfrak{h}]$, but in fact the choice of such $\mathfrak{h}$ does not alter the equivalence class of the representation $\text{ind}_H^G(\sigma_\lambda)$ (see e.g. Theorem 2.2.2 in \cite{corwin2004representations}). Moreover, the maximality property of the subalgebra $\mathfrak{h}$ gives the irreducibility of the representation $\text{ind}_H^G(\sigma_\lambda)$, and for any two elements in $\mathfrak{g}^*$ in the same coadjoint orbit the resulting representations are equivalent. Finally, every equivalence class of unitary irreducible representation of $G$ can be realized by this construction. Summarizing, we have the following:
\begin{theorem}[see e.g. Theorem 7.9 in \cite{folland2016course}]
  Let $G$ be a simply connected nilpotent Lie group. There is a correspondence between coadjoint orbits of $G$ and unitary irreducible representations of $G$ given by $\mathcal{O}_\lambda\mapsto \text{ind}_H^G(\sigma_\lambda)$.
\end{theorem}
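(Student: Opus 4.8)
The plan is to prove the correspondence by induction on $\dim G$, treating $\mathcal{O}_\lambda \mapsto \mathrm{ind}_H^G(\sigma_\lambda)$ as a well-defined map on orbits and then showing it is a bijection onto the unitary dual $\widehat{G}$. The ingredients already assembled above --- existence of a polarizing subalgebra $\mathfrak{h}$ maximal subordinate to $\lambda$, independence of the equivalence class of $\mathrm{ind}_H^G(\sigma_\lambda)$ from the choice of $\mathfrak{h}$, irreducibility coming from the maximality of $\mathfrak{h}$, and invariance of the class along a coadjoint orbit --- I would take as given, since they are the cited results of \cite{corwin2004representations}. This reduces the theorem to two statements: \emph{injectivity}, that orbits in distinct $\mathcal{O}_\lambda$ give inequivalent representations, and \emph{surjectivity}, that every irreducible unitary representation of $G$ arises this way. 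The base case $\dim G = 1$ is Fourier duality: $G \cong \mathbb{R}$, the coadjoint action is trivial, each orbit is a single point $\lambda \in \mathfrak{g}^* \cong \mathbb{R}$, and $\sigma_\lambda$ is already the character $x \mapsto e^{2\pi i \lambda x}$, exhausting $\widehat{\mathbb{R}}$.

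For the inductive step I would exploit the structure of nilpotent Lie algebras. Since $\mathfrak{g}$ is nilpotent with nontrivial center, the descending central series supplies a codimension-one ideal $\mathfrak{h}_0 \subset \mathfrak{g}$ containing the center; writing $\mathfrak{g} = \mathbb{R}Y \oplus \mathfrak{h}_0$ and $H_0 = \exp\mathfrak{h}_0$ gives a normal subgroup with $G/H_0 \cong \mathbb{R}$, to which the inductive hypothesis applies, classifying $\widehat{H_0}$ by coadjoint orbits in $\mathfrak{h}_0^*$. The heart of the argument is to run Mackey's machine for $H_0$ and match it with the restriction map $p : \mathfrak{g}^* \to \mathfrak{h}_0^*$. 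Given an irreducible $\pi$ of $G$, an irreducible constituent $\rho$ of $\pi|_{H_0}$ has a $G$-conjugation orbit in $\widehat{H_0}$ that is either a single point or a free $\mathbb{R}$-orbit; correspondingly, either $\rho$ is $G$-invariant and extends to $\tilde\rho$ on $G$ (so $\pi = \tilde\rho$ twisted by a character of $G/H_0$), or $\pi \cong \mathrm{ind}_{H_0}^G \rho$ by the imprimitivity theorem. On the orbit side this dichotomy mirrors whether $Y$ lies in the stabilizer $\mathfrak{g}_{p(\lambda)}$: in the first case $\mathcal{O}_\lambda$ projects onto a single $H_0$-orbit, while in the second $\mathcal{O}_\lambda = p^{-1}(\mathcal{O}^{H_0})$ for an $H_0$-orbit $\mathcal{O}^{H_0}$, the polarization $\mathfrak{h}$ may be chosen inside $\mathfrak{h}_0$, and induction in stages identifies $\mathrm{ind}_H^G(\sigma_\lambda)$ with $\mathrm{ind}_{H_0}^G(\pi_{p(\lambda)})$.

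With this matching in place, injectivity and surjectivity follow by tracking the two branches through the inductive hypothesis: inequivalent $H_0$-orbits yield inequivalent $G$-representations within each branch, and every $\pi \in \widehat{G}$ is sorted into one branch and hence hit. I expect the main obstacle to be exactly this bookkeeping combined with the careful use of Mackey's theory: one must verify that the $G$-action on $\widehat{H_0}$ is regular so that the machine applies and $G$ is type I, that the cocycle obstruction to extending a $G$-fixed $\rho$ vanishes, and above all that $p$ sets up a clean correspondence between $G$-orbits in $\mathfrak{g}^*$ and the pair consisting of an $H_0$-orbit together with its transverse data --- the layering of coadjoint orbits over a codimension-one ideal. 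A secondary subtlety, if one does not simply cite it, is the independence of the polarization, which is itself most cleanly handled by the same induction in stages together with Mackey's subgroup theorem.
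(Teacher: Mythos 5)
The paper does not actually prove this statement: it is quoted purely as background, with ``see e.g.\ Theorem 7.9 in \cite{folland2016course}'' serving as the entire justification, and every surrounding ingredient in Section~\ref{orbitMethod} --- existence of a maximal subordinate subalgebra, independence of the induced representation from the choice of polarization, irreducibility, orbit-invariance, exhaustiveness --- is likewise cited to \cite{corwin2004representations} rather than argued. So there is no in-paper proof to compare against; the relevant comparison is with the proof in the cited sources, and there your sketch is not a different route but precisely the standard one: Kirillov's induction on $\dim G$ through a codimension-one ideal $\mathfrak{h}_0$, the Mackey machine for the normal subgroup $H_0=\exp\mathfrak{h}_0$, and the matching of the two Mackey branches (fixed point versus free orbit) with the fibering of coadjoint orbits under the restriction $p:\mathfrak{g}^*\to\mathfrak{h}_0^*$. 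This is exactly how Folland's Theorem 7.9 and Chapter 2 of Corwin--Greenleaf proceed, so your proposal should be read as a reconstruction of the cited proof rather than an alternative to anything in the paper.

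Two cautions if you intend to flesh the outline out. First, your dichotomy ``a single point or a free $\mathbb{R}$-orbit'' skips a genuine step: a closed subgroup of $\mathbb{R}$ can also be a lattice $c\mathbb{Z}$, and ruling that out requires knowing that the stabilizer of the $\exp(tY)$-action on $\widehat{H_0}\cong \mathfrak{h}_0^*/H_0$ is connected; this uses the inductive identification of $\widehat{H_0}$ with the orbit space together with the unipotence of the coadjoint action (stabilizers are Zariski-closed, hence connected), which is where nilpotency enters irreplaceably. Second, the extension of a $G$-fixed $\rho$ requires the vanishing of the Mackey obstruction in $H^2(\mathbb{R},U(1))$, and applying the machine at all requires $H_0$ to be type I with regular $G$-action on $\widehat{H_0}$; you flag both but defer them, and in the textbook treatments they are established by the very same induction, so the ``bookkeeping'' you describe as the main obstacle is in fact the entire content of the proof. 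As a plan your proposal is sound and faithful to the literature; as written it is an outline, not a proof.
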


\subsection{The Moment Map on Tangent Bundles of Transformation Models}

\begin{theorem}
  Let $G$ be a simply connected nilpotent Lie group. Let $\mathcal{X}$ be a $G$-homogeneous space with a $G$-invariant measure, such that the stabilizer subgroup $N$ at some point in $\mathcal{X}$ is normal, i.e. $\mathcal{X}=G/N$.  For any $\lambda \in \mathfrak{g}^*$ such that $\lambda|_{\text{Lie}(N)}=0$, the irreducible unitary representation of $G$ corresponding to the coadjoint orbit $\mathcal{O}_\lambda$ under Kirillov's correspondence can be realized as a sub-representation of the left-translation representation of $G$ on $L^2(\mathcal{X})$. 

\end{theorem}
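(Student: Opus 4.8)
The plan is to exploit that, because $N$ is normal, the left-translation representation of $G$ on $L^2(\mathcal{X})=L^2(G/N)$ is really the pullback of the regular representation of the quotient group $\bar G := G/N$, and then to match this with the Kirillov data attached to $\mathcal{O}_\lambda$. Write $\mathfrak{n}=\text{Lie}(N)$, which is an ideal of $\mathfrak{g}$ by normality; since $G$ is simply connected nilpotent, so is $\bar G$, and the orbit-method theorem applies to it. First I would analyze the polarization. Because $\mathfrak{n}$ is an ideal with $\lambda|_{\mathfrak{n}}=0$, the skew form $B_\lambda(X,Y)=\lambda([X,Y])$ satisfies $B_\lambda(\mathfrak{n},\mathfrak{g})=\lambda([\mathfrak{n},\mathfrak{g}])\subseteq\lambda(\mathfrak{n})=0$, so $\mathfrak{n}$ lies in the radical of $B_\lambda$ and is therefore contained in every maximal subordinate subalgebra $\mathfrak{h}$. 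Choosing such an $\mathfrak{h}\supseteq\mathfrak{n}$, the character $\sigma_\lambda(\exp X)=e^{2\pi i\lambda(X)}$ is trivial on $N=\exp\mathfrak{n}$. A short contravariance computation then upgrades this to triviality of the whole induced representation $\pi_\lambda=\text{ind}_H^G(\sigma_\lambda)$ on $N$: for $n\in N$ and $f\in\mathcal{H}_{\pi_\lambda}$ one rewrites $f(n^{-1}g)=\sigma_\lambda(g^{-1}ng)\,f(g)=f(g)$, using normality of $N$ and $\sigma_\lambda|_N=1$. Hence $\pi_\lambda$ descends to an irreducible representation $\bar\pi$ of $\bar G$.

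Next I would identify $\bar\pi$ through Kirillov's correspondence for $\bar G$. The functional $\lambda$ descends to $\bar\lambda\in(\mathfrak{g}/\mathfrak{n})^*=\bar{\mathfrak{g}}^*$, and $\bar{\mathfrak{h}}=\mathfrak{h}/\mathfrak{n}$ is a polarization for $\bar\lambda$: it is subordinate since $\bar\lambda([\bar{\mathfrak{h}},\bar{\mathfrak{h}}])=\lambda([\mathfrak{h},\mathfrak{h}])=0$, and a dimension count using $\mathfrak{n}\subseteq\mathfrak{g}(\lambda)$ (the radical) confirms maximality. Since $\sigma_\lambda$ descends to $\sigma_{\bar\lambda}$ and induction is compatible with the quotient map $G\to\bar G$, we get $\bar\pi=\text{ind}_{\bar H}^{\bar G}(\sigma_{\bar\lambda})$, which is precisely the irreducible representation that Kirillov's correspondence attaches to the coadjoint orbit $\mathcal{O}_{\bar\lambda}$ of $\bar G$. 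Because the annihilator $\mathfrak{n}^\perp$ is coadjoint-invariant (as $\mathfrak{n}$ is an ideal) and is canonically $\bar{\mathfrak{g}}^*$, the orbit $\mathcal{O}_\lambda\subseteq\mathfrak{n}^\perp$ corresponds exactly to $\mathcal{O}_{\bar\lambda}$. Thus the representation the theorem assigns to $\mathcal{O}_\lambda$ is $\bar\pi$, viewed as a $G$-representation via $G\to\bar G$.

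It then remains to place $\bar\pi$ inside $L^2(\mathcal{X})$. The key identification is $L^2(G/N)=\text{ind}_N^G(\mathbf{1}_N)$ with $G$ acting by left translation; as this action is trivial on $N$, it is the pullback along $G\to\bar G$ of the regular representation of $\bar G$ on $L^2(\bar G)$. I would then invoke the Plancherel decomposition for the nilpotent group $\bar G$: $L^2(\bar G)$ disintegrates as a direct integral of the irreducibles $\pi_{\mathcal{O}}$ over all coadjoint orbits $\mathcal{O}\subseteq\bar{\mathfrak{g}}^*$, equivalently over all orbits $\mathcal{O}\subseteq\mathfrak{n}^\perp\subseteq\mathfrak{g}^*$. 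Since $\mathcal{O}_{\bar\lambda}$ (i.e.\ $\mathcal{O}_\lambda$) lies in the support of this decomposition, $\bar\pi$ is realized as a constituent of $L^2(\mathcal{X})$. An equivalent route, bypassing the explicit passage to $\bar G$, is to apply the Corwin--Greenleaf formula decomposing $\text{ind}_N^G(\mathbf{1}_N)$ over exactly those orbits whose restriction to $\mathfrak{n}$ is the zero orbit, namely the orbits contained in $\mathfrak{n}^\perp$.

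The main obstacle is precisely this last step. For non-compact $\bar G$ the Plancherel measure is purely continuous, so $\bar\pi$ occurs as a direct-integral component rather than as a literal closed invariant subspace, and the word ``sub-representation'' must be understood in that disintegration sense (a genuine closed invariant copy exists only when $\mathcal{O}_{\bar\lambda}$ is an atom, e.g.\ when $\bar G$ is compact). Making this rigorous requires the Plancherel theorem for simply connected nilpotent Lie groups, or the Corwin--Greenleaf restriction formula; this is the one ingredient lying beyond the Kirillov correspondence quoted above, and it is where the analytic care is concentrated.
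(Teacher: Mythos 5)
Your route is genuinely different from the paper's, and the comparison is instructive. The paper never passes to the quotient group $\bar G=G/N$ and never invokes Plancherel theory: it (a) produces a maximal subordinate subalgebra $\mathfrak{h}\supseteq\mathfrak{n}$ by a Zorn's lemma argument (your observation that $\mathfrak{n}$ lies in the radical of $B_\lambda$, hence in \emph{every} maximal subordinate subalgebra, is cleaner and in fact justifies a maximality claim the paper leaves implicit); (b) equips $G/H$ with the pushforward $\rho_*\mu$ of the invariant measure $\mu$ on $G/N$ under $\rho:G/N\to G/H$; and (c) notes that the contravariant functions in the induced-representation space are constant on $N$-cosets, hence descend to $G/N$, which is claimed to give an isometric $G$-equivariant embedding of $\text{ind}_H^G(\sigma_\lambda)$ into $L^2(G/N)$ as a closed invariant subspace. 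Your argument up through the identification of $\bar\pi$ with the Kirillov representation of $\bar G$ attached to $\mathcal{O}_{\bar\lambda}$ is correct, and is a perfectly good substitute for the paper's steps (a) and (c).

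The obstacle you flag in your last paragraph, however, is not a removable defect of your route: it is fatal to the statement itself, and your Plancherel analysis is the correct one. Since $\pi_\lambda$ factors through $\bar G$, which is a nontrivial simply connected nilpotent group with noncompact center whenever $N\neq G$, $\pi_\lambda$ has no square-integrable matrix coefficients and so cannot occur as a closed irreducible subspace of the regular representation on $L^2(\bar G)$. Concretely, for $G=\mathbb{R}$, $N=\{e\}$, or for the Heisenberg group with $N$ its center, the representations in question are unitary characters, and a character spanning a one-dimensional invariant subspace of $L^2$ would have constant modulus, which is impossible. The paper's proof conceals this at step (b): whenever $N\neq G$ one necessarily has $H\supsetneq N$ (for any $X\notin\mathfrak{n}$ the space $\mathfrak{n}+\mathbb{R}X$ is a subordinate subalgebra, since $\mathfrak{n}$ is an ideal annihilated by $\lambda$), so the fibers $H/N$ of $\rho$ are noncompact of infinite invariant volume; the pushforward $\rho_*\mu$ then takes only the values $0$ and $\infty$, and the space of contravariant functions square-integrable against it reduces to $\{0\}$. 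Equivalently, under the paper's embedding the image consists of functions in $L^2(G/N)$ whose modulus is constant along the noncompact $H/N$-orbits, hence zero almost everywhere. So what the paper actually embeds into $L^2(\mathcal{X})$ is the zero space, not the Kirillov representation, which must be induced using the genuine invariant measure on $G/H$ rather than $\rho_*\mu$. In short: your proposal proves exactly what can be proved (containment in the direct-integral/weak sense, via Plancherel or the Corwin--Greenleaf restriction formula), and your closing caveat identifies a real error in the paper's proof rather than a gap in your own argument.
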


  \begin{proof}
    Let $\lambda\in \mathfrak{g}^*$. Denoter $\text{Lie}(N)=\mathfrak{n}$. Then $\lambda\vert_{\mathfrak{n}}=0$. Let $\mathcal{S}$ be the collection of subalgebras $\mathfrak{h}$ in $\mathfrak{g}$ such that $\lambda\vert_{[\mathfrak{h},\mathfrak{h}]}=0$ and $\mathfrak{n}\subseteq \mathfrak{h}$. $\mathcal{S}$ is nonempty since $\mathfrak{n}\in \mathcal{S}$. Moreover if $\mathfrak{h}_1\subseteq \mathfrak{h}_2\subseteq\dots$ is a chain in $\mathcal{S}$, then $\bigcup_{n} \mathfrak{h}_n$ is in $\mathcal{S}$ and is an upper bound of the chain. By Zorn's lemma $\mathcal{S}$ contains a maximal element $\mathfrak{h}$ with respect to inclusion, which is also a maximal subalgebra in $\mathfrak{g}$ such that $\lambda\vert_{[\mathfrak{h},\mathfrak{h}]}=0$. Therefore we can use $\mathfrak{h}$ and the corresponding Lie subgroup $H=\exp(\mathfrak{h})$ to build an induced representation.

    Following the notations in Section \ref{orbitMethod}, we denote by $\sigma_\lambda$ the $1$-dimensional unitary representation of the subgroup $H$ given by the formula (\ref{1dunirep}). The construction in Section \ref{orbitMethod} requires an invariant measure on $G/H$.
    In our case, $N$ is a subgroup of $H$, and we have assumed that $G/N$ possesses an invariant measure which we denote by $\mu$. Let $\rho:G/N\to G/H$ be the natural projection map, and $\rho_*(\mu)$ the pushforward measure on $G/H$ with the property that for any measurable function $f$ on $G/H$
    \begin{align}\label{pushforwardmeas}
      \int_{G/H} f(gH) \, d(\rho_* \mu)(gH)= \int_{G/N} f\circ \rho(gN) \, d\mu (gN).
    \end{align}
    This measure is $G$-invariant, since for any $g\in G$, 
    \begin{align}
      g_*(\rho_* \mu)=(g\circ \rho)_*\mu = (\rho\circ g)_*\mu=\rho_*(g_*\mu)=\rho_*\mu.
    \end{align}
    Now the Hilbert space $\mathcal{H}$ of the induced representation $\text{ind}_H^G(\sigma_\lambda)$ in our case contains all Borel measurable functions $f:G\to \C$ such that for any $h\in H, g\in G$, $f(gh)=\sigma(h^{-1})f(g)$ and $\int_{G/H} \norm{f(g)}^2 d(\rho_*\mu)(gH)<\infty$. 
    Now for any $\exp X\in N$ for $X\in \mathfrak{n}$, since $\lambda(X)=0$, we know that $\sigma(\exp X)=1$. Therefore for any $n\in N, g\in G$, $f(gn)=\sigma(n^{-1})f(g)=f(g)$. Therefore for any $f\in \mathcal{H}$, $f$ is constant on each coset of $N$. Thus $f$ induces a Borel measurable function $\tilde f:G/N\to \C$. Moreover, by (\ref{pushforwardmeas}), the integrability property for functions in $\mathcal{H}$ is equivalent to $\int_{G/N} \norm{f(g)}^2 \, d\mu (gN)<\infty$.  Therefore $\tilde f \in L^2(G/N)$. Therefore there is an embedding of $\mathcal{H}$ to the Hilbert space $L^2(G/N)$, and the unitary representation of $G$ on $\mathcal{H}$ defined by (\ref{lefttransrep}) coincides with the natural left-translation representation of $G$ in $L^2(G/N)$. We can then conclude that the representation $\text{ind}_H^G(\sigma_\lambda)$ can be realized as a sub-representation of the left-translation representation of $G$ on $L^2(\mathcal{X})=L^2(G/N)$.
  \end{proof}
\bigskip

\begin{corollary}
  Under the same assumptions of $G$ and $\mathcal{X}$, let $M$ be an exponential transformation model on $\mathcal{X}$, and $TM$ the K\"ahler manifold that is the tangent bundle of $M$. 
  If there is a map $\mu:TM\to \mathfrak{g}^*$ such that for any $\lambda \in \text{Im}[\mu]\subseteq \mathfrak{g}^*$, $\lambda|_{\text{Lie}(N)}=0$, then the irreducible unitary representation of $G$ corresponding to the coadjoint orbit $\mathcal{O}_\lambda$ under Kirillov's correspondence using the orbit method can be realized as a sub-representation of the left-translation representation of $G$ on $L^2(\mathcal{X})$. Moreover these subrepresentations corresponding to different coadjoint orbits in the image of $\lambda$ form a direct sum in $L^2(\mathcal{X})$.  
\end{corollary}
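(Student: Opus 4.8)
The plan is to deduce the first assertion directly from the preceding Theorem, and then to promote the individual realizations to an orthogonal direct sum by combining Kirillov's correspondence with Schur's lemma.

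For the first assertion, fix $\lambda \in \text{Im}[\mu]$. By hypothesis $\lambda|_{\text{Lie}(N)} = 0$, which is exactly the condition required by the preceding Theorem; applying that Theorem verbatim produces an isometric embedding of the Hilbert space of $\text{ind}_H^G(\sigma_\lambda)$ into $L^2(G/N) = L^2(\mathcal{X})$ under which the induced representation becomes the left-translation representation, which is the desired realization. Since $\mu$ is the moment map of a Hamiltonian action it is $G$-equivariant (intertwining the $G$-action on $TM$ with the coadjoint action), so $\text{Im}[\mu]$ is $G$-invariant and decomposes as a union of coadjoint orbits; this makes the phrase ``coadjoint orbits in the image of $\mu$'' well-posed, and to each such orbit $\mathcal{O}_\lambda$ we attach the subrepresentation $W_\lambda \subseteq L^2(\mathcal{X})$ just constructed.

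For the direct-sum assertion, I would pick one representative from each distinct coadjoint orbit in $\text{Im}[\mu]$ and show the associated subrepresentations are mutually orthogonal. The crucial input is Kirillov's correspondence (the Theorem in Section \ref{orbitMethod}): distinct orbits $\mathcal{O}_{\lambda_1} \neq \mathcal{O}_{\lambda_2}$ yield inequivalent irreducible unitary representations $W_{\lambda_1}, W_{\lambda_2}$. Let $P_1$ be the orthogonal projection of $L^2(\mathcal{X})$ onto $W_{\lambda_1}$, which is closed, being the isometric image of a complete space. Since the left-translation representation is unitary and $W_{\lambda_1}$ is $G$-invariant, its orthogonal complement is $G$-invariant too, so $P_1$ commutes with the $G$-action; hence $P_1|_{W_{\lambda_2}} \colon W_{\lambda_2} \to W_{\lambda_1}$ is an intertwiner between inequivalent irreducibles, and Schur's lemma forces it to vanish. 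Thus $W_{\lambda_2} \subseteq \ker P_1 = W_{\lambda_1}^{\perp}$, giving $W_{\lambda_1} \perp W_{\lambda_2}$, and pairwise orthogonality of the family $\{W_\lambda\}$ makes the sum $\bigoplus_{\mathcal{O}_\lambda} W_\lambda$ direct inside $L^2(\mathcal{X})$.

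The main obstacle is conceptual rather than computational: everything rests on the realizations from the Theorem being genuine unitary subrepresentations (so that unitarity of the ambient representation can be exploited) and on distinct orbits producing inequivalent irreducibles, both of which are already guaranteed by the cited results, leaving only the short Schur-lemma orthogonality argument as real work. The one subtlety worth flagging is that if $\text{Im}[\mu]$ meets infinitely many orbits, the sum is an algebraic orthogonal direct sum whose Hilbert-space closure sits inside $L^2(\mathcal{X})$; this is harmless, since pairwise orthogonality already forces the algebraic sum to be direct.
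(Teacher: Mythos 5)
Your proof is correct, and it is actually more careful than the paper's own argument, which disposes of the direct-sum claim in one sentence: the paper merely observes that irreducible subrepresentations of $L^2(\mathcal{X})$ have no non-trivial proper subrepresentations, so that any two of the realized subspaces intersect trivially. Strictly speaking, pairwise trivial intersection alone does not force a sum of more than two subspaces to be direct; your route through Kirillov's injectivity (distinct orbits give \emph{inequivalent} irreducibles) plus Schur's lemma establishes pairwise \emph{orthogonality}, which does yield directness even for infinitely many summands, and your closedness remark (the embedding from the Theorem is isometric, hence has closed range) is exactly what makes the orthogonal projection argument legitimate. So your approach buys a complete proof where the paper's is only a sketch. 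Two small caveats: first, this corollary does not assume that $\mu$ is a moment map of a Hamiltonian action --- that hypothesis enters only in the \emph{next} corollary --- so your equivariance aside is not available here; moreover, moment maps are not automatically coadjoint-equivariant (there is a cocycle obstruction in general). Fortunately that remark is inessential: the coadjoint orbits $\mathcal{O}_\lambda$ of points $\lambda$ in the image are well-defined whether or not the image is a union of orbits, and the rest of your argument never uses equivariance. Second, when you pick ``one representative from each distinct coadjoint orbit,'' you implicitly use that representatives of the same orbit give equivalent representations and hence the same isotypic contribution; this is covered by the cited correspondence, but it is worth saying explicitly since the realized subspace is constructed from a particular $\lambda$ and a particular maximal subalgebra $\mathfrak{h}$.
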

\begin{proof}
  The direct sum property is given by the fact that irreducible subrepresentations in $L^2(\mathcal{X})$ have no non-trivial proper subrepresentations.
\end{proof}
\bigskip

      In particular, suppose the symplectic $G$-action on $TM$ is Hamiltonian with moment map $\mu:TM\to\mathfrak{g}^*$.
Let $\mu^*:\mathfrak{g}\to C^\infty(TM)$ be the comoment map of $\mu$.
If $X\in \mathfrak{n}=\text{Lie}(N)$, then its fundamental vector field $X^\sharp$ on $TM$ is $0$, so 
        \begin{align*}
            d\mu^*(X)=\iota_{X^\sharp}\omega_{Dom}=0\implies \mu^*(X)\text{ is constant}.
        \end{align*}
         If there exists $c\in [\mathfrak{g},\mathfrak{g}]^0\subseteq \mathfrak{g}^*$ such that $c|_{\mathfrak{n}}=\mu^*|_{\mathfrak{n}}$, then the dual map $(\mu^*-c)^*:TM\to \mathfrak{g}^*$ of $(\mu^*-c)$ is a moment map. 
Moreover $(\mu^*-c)^*$ satisfies the condition in the theorem: for any $\lambda\in \text{Im}[(\mu^*-c)^*]$, $\lambda|_{\mathfrak{n}}=0$.
In other words, there exists a moment map $\tilde \mu:TM\to \mathfrak{g}^*$ of the Hamiltonian $G$-action on $TM$ such that the conditions of the corollary holds.
    To summarize,
    \begin{corollary}
      Under the same assumptions of $G$,$\mathcal{X}$, $M$, if the $G$-action on $TM$ is Hamiltonian with comoment map $\mu^*:\mathfrak{g}\to C^\infty(TM)$ and $c\in [\mathfrak{g},\mathfrak{g}]^0$ such that $c|_{\text{Lie}(N)}=\mu^*|_{\text{Lie}(N)}$, then there exists a moment map $\tilde \mu:TM\to \mathfrak{g}^*$ of the $G$-action such that
               \begin{align*}
             \bigoplus_{\mathcal{O}_\lambda,\lambda\in \text{Im}[\tilde \mu]}\pi_{\mathcal{O}_\lambda}\subseteq L^2(\mathcal{X})
         \end{align*}
         is a subrepresentation of the unitary $G$-representation in $L^2(\mathcal{X})$, where $\pi_{\mathcal{O}_\lambda}$ is a $G$-subrepresentation in $L^2(\mathcal{X})$ equivalent to the unitary irreducible representation of $G$ corresponding to $\mathcal{O}_\lambda$ under Kirillov's correspondence.
    \end{corollary}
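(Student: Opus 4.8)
The plan is to reduce this statement to the preceding Theorem and its first Corollary by producing, out of the given Hamiltonian action, a genuine moment map $\tilde\mu \colon TM \to \mathfrak{g}^*$ whose entire image lies in the annihilator $\mathfrak{n}^0 = \{\lambda \in \mathfrak{g}^* : \lambda|_{\mathfrak{n}} = 0\}$ of $\mathfrak{n} = \mathrm{Lie}(N)$. Once such a $\tilde\mu$ is available, every $\lambda \in \mathrm{Im}[\tilde\mu]$ satisfies $\lambda|_{\mathfrak{n}} = 0$, so the first Corollary applies directly and yields both the embedding of each $\pi_{\mathcal{O}_\lambda}$ into $L^2(\mathcal{X})$ and the direct-sum assertion. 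The real content of the corollary is thus the construction of $\tilde\mu$, which is exactly what the discussion preceding the statement carries out.

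First I would justify the vanishing of the fundamental vector fields invoked above. Because $N$ is normal and $G$ acts transitively on $\mathcal{X} = G/N$, every stabilizer equals $N$, so $N$ acts trivially on $\mathcal{X}$; hence $N$ acts trivially on $M$ (indeed $(nP)(A) = P(n^{-1}A) = P(A)$) and therefore on $TM$. Consequently $X^\sharp \equiv 0$ on $TM$ for every $X \in \mathfrak{n}$, and from $d\mu^*(X) = \iota_{X^\sharp}\omega_{Dom} = 0$ with $TM$ connected we conclude that each $\mu^*(X)$ is constant; in other words $X \mapsto \mu^*(X)$ restricts on $\mathfrak{n}$ to a fixed linear functional with constant values.

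Next I would invoke the hypothesised $c \in [\mathfrak{g},\mathfrak{g}]^0$ with $c|_{\mathfrak{n}} = \mu^*|_{\mathfrak{n}}$ and set $\tilde\mu = \mu - c$, i.e. the map with comoment $X \mapsto \mu^*(X) - c(X)$. The differential condition is unchanged, since $\langle \tilde\mu, X\rangle$ and $\langle \mu, X\rangle$ differ by the constant $c(X)$ and hence have the same exterior derivative $\iota_{X^\sharp}\omega_{Dom}$; and $G$-equivariance is preserved because $c \in [\mathfrak{g},\mathfrak{g}]^0$ makes $c$ an $\mathrm{Ad}^*$-fixed point (for the connected group $G$ the infinitesimal condition $c|_{[\mathfrak{g},\mathfrak{g}]} = 0$ is equivalent to $\mathrm{Ad}^*_g c = c$ for all $g$), so $\tilde\mu(g z) = \mathrm{Ad}^*_g \mu(z) - c = \mathrm{Ad}^*_g \tilde\mu(z)$. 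Thus $\tilde\mu$ is again a moment map for the action. Finally, for $X \in \mathfrak{n}$ and any $z \in TM$ one has $\langle \tilde\mu(z), X\rangle = \mu^*(X)(z) - c(X) = 0$, since $\mu^*(X)(z)$ is the constant $c(X)$; hence $\mathrm{Im}[\tilde\mu] \subseteq \mathfrak{n}^0$, as required.

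Applying the first Corollary to $\tilde\mu$ then delivers the subrepresentations, and the direct-sum statement follows as there: distinct coadjoint orbits correspond under Kirillov's bijection to pairwise inequivalent irreducibles, and inequivalent irreducible subrepresentations of a unitary representation are mutually orthogonal, so $\bigoplus_{\mathcal{O}_\lambda}\pi_{\mathcal{O}_\lambda}$ is an internal orthogonal direct sum inside $L^2(\mathcal{X})$. I expect the only substantive point to be the moment-map verification for the shift --- specifically, recognising that membership $c \in [\mathfrak{g},\mathfrak{g}]^0$ is exactly the $\mathrm{Ad}^*$-invariance needed to keep $\tilde\mu$ equivariant (together with the observation that trivial $N$-action forces $\mu^*|_{\mathfrak{n}}$ to be constant). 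Everything else is bookkeeping layered on top of the Theorem and its first Corollary.
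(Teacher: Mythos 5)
Your proposal is correct and follows essentially the same route as the paper: shift the moment map to $\tilde\mu=\mu-c$, use the vanishing of the fundamental vector fields $X^\sharp$ for $X\in\mathrm{Lie}(N)$ to see that $\mu^*|_{\mathfrak{n}}$ is constant and hence $\mathrm{Im}[\tilde\mu]\subseteq\mathfrak{n}^0$, and then invoke the preceding theorem and corollary. Your write-up actually supplies details the paper leaves implicit (why $N$ acts trivially, why $c\in[\mathfrak{g},\mathfrak{g}]^0$ preserves equivariance of the shifted map, and the orthogonality argument behind the direct sum), but the underlying argument is the same.
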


Since for any comoment map $\mu^*$, $\mu^*(X)$ is constant, such comoment map is already close to satisfying the condition that any $\lambda\in \text{Im}[\mu]$ satisfies $\lambda|_{\text{Lie}(N)}=0$. Therefore other alternative conditions for the Hamiltonian $G$-action can be required to arrive at the same conclusion.
\bigskip

\nocite{*}

\bibliographystyle{unsrt}


\end{document}